\documentclass[11pt,a4paper]{amsart}

\usepackage{epsfig}
\usepackage{graphicx}
\usepackage[usenames,dvipsnames]{xcolor}
\usepackage[colorlinks=true,linkcolor=Blue,citecolor=Green,urlcolor=Black]{hyperref}
\usepackage{amsmath,amsthm,amsfonts,amssymb}
\usepackage{commath}	
\usepackage{nicefrac}
\usepackage[T1]{fontenc}
\usepackage[utf8]{inputenc}
\usepackage[textwidth=2.5cm,textsize=small]{todonotes}
\usepackage[capitalise,nameinlink,noabbrev]{cleveref}   
\crefname{enumi}{}{}
\usepackage[alwaysadjust]{paralist}

\newtheorem{theorem}{Theorem}[section]
\newtheorem{lemma}[theorem]{Lemma}
\newtheorem{proposition}[theorem]{Proposition}

\newtheorem{corollary}[theorem]{Corollary}
\newtheorem{conjecture}[theorem]{Conjecture}

\newtheorem{remark}[theorem]{Remark}
\theoremstyle{definition}
\newtheorem{definition}[theorem]{Definition}

\DeclareMathOperator{\conv}{conv}

\DeclareMathOperator{\bd}{bd}
\DeclareMathOperator{\diag}{diag}

\def\K{\mathcal{K}}
\def\D{\mathrm{D}}

\def\O{\mathcal{O}}
\def\calS{\mathcal{S}}
\def\R{\mathbb{R}}

\def\B{\mathbb{B}}
\def\bS{\mathbb{S}}
\def\N{\mathbb{N}}

\def\V{\mathrm{vol}}

\def\w{\mathrm{w}}
\def\I{\mathrm{I}}

\def\eps{\varepsilon}
\def\wD{\mathrm{W}}
\def\e{\mathrm{e}}

\DeclareMathOperator{\vol}{vol}

\DeclareMathOperator{\lin}{lin}

\DeclareMathOperator{\GL}{GL}
\DeclareMathOperator{\NO}{NO}

\DeclareMathOperator{\iq}{idq}
\DeclareMathOperator{\iwq}{iwq}
\DeclareMathOperator{\DR}{DR}



\begin{document}

\title[On the reverse isodiametric inequality]{On the reverse isodiametric problem and Dvoretzky-Rogers-type volume bounds}
\author{Bernardo Gonz\'{a}lez Merino}
\address{Departamento de An\'alisis Matem\'atico, Facultad de Matem\'aticas, Universidad de Sevilla, Apdo. 1160, 41080-Sevilla, Spain}
\email{bgonzalez4@us.es}

\author{Matthias Schymura}
\address{BTU Cottbus-Senftenberg\\
  Platz der Deutschen Einheit 1\\
  03046 Cottbus\\
  Germany}
\email{schymura@b-tu.de}

\thanks{
The research of the first author is a result of the activity developed within the framework of the Programme in Support of Excellence Groups of the Regi\'{o}n de Murcia, Spain, by Fundaci\'{o}n S\'{e}neca, Science and Technology Agency of the Regi\'{o}n de Murcia. He was partially supported by Fundaci\'{o}n S\'{e}neca project 19901/GERM/15, MINECO project MTM2015-63699-P, and MICINN Project PGC2018-094215-B-I00, Spain.
The second author was partially supported by the Freie Universit\"at Berlin within the Excellence Initiative of the German Research Foundation.}

\date{\today}

\subjclass[2010]{Primary 52A20; Secondary 52A38, 52A40}


\begin{abstract}
The isodiametric inequality states that the Euclidean ball maximizes the volume among all convex bodies of a given diameter.
We are motivated by a conjecture of Makai Jr.~on the reverse question: Every convex body has a linear image whose isodiametric quotient is at least as large as that of a regular simplex.
We relate this reverse isodiametric problem to minimal volume enclosing ellipsoids and to the Dvoretzky-Rogers-type problem of finding large volume simplices in any decomposition of the identity matrix.

As a result, we solve the reverse isodiametric problem for $o$-symmetric convex bodies and obtain a strong asymptotic bound in the general case.
Using the Cauchy-Binet formula for minors of a product of matrices, we obtain Dvoretzky-Rogers-type volume bounds which are of independent interest.
\end{abstract}

\maketitle


\section{Introduction}

Let $\K^n$ be the family of \emph{convex bodies} in $\R^n$, that is, full-dimensional convex compact sets.
If $K=-K$, we say that $K$ is \emph{$o$-symmetric}, and we denote by $\K^n_o$ the family of all such convex bodies.
Further, we denote by $\D(K)=\max\{\|x-y\| : x,y \in K\}$ the diameter and by $\V_n(K)$ the volume of $K\in\K^n$.
If the dimension is clear from the context, we just write~$\V(K)$.

A classic result in convexity is the \emph{isodiametric inequality} due to Bieberbach~\cite{bieberbach1915ueber} (cf.~\cite[Sect.~8.2]{gruber2007convex}) which states that the Euclidean unit ball~$\B_2^n$ has the maximum volume among all convex bodies of the same diameter.
In other words, $\B_2^n$ maximizes the \emph{isodiametric quotient}, more precisely
\begin{align}
\iq(K) := \frac{\V(K)}{\D(K)^n} \leq \frac{\V(\B_2^n)}{\D(\B_2^n)^n},\quad\textrm{for all }K\in\K^n.\label{eqn_isodiametric_inequality}
\end{align}
Boxes of arbitrarily large diameter and constant volume show that $\iq(K)$ is in general not bounded from below by any constant $c>0$ that only depends on the dimension~$n$.
In the spirit of the reverse isoperimetric inequality by Ball~\cite{ball1991volume}, it is natural to ask whether there is a suitable linear transformation~$A$ such that the linear image~$AK$ of~$K$ has an isodiametric quotient that \emph{can} be bounded away from zero.

Makai Jr.~\cite{makai1978on} posed the conjecture that there is always a linear image whose isodiametric quotient is at least as large as that of a regular simplex.
He was motivated by an application to the minimal density of non-separable lattice arrangements of convex bodies, and based his conjecture on the solution of the
reverse isodiametric problem in the plane, which was found by Behrend~\cite{behrend1937ueber} already in 1937.

\begin{conjecture}[{Makai Jr.~\cite{makai1978on}}]\label{conj_rev_idi}
For every $K\in\K^n$ there is a linear transformation $A \in \GL_n(\R)$ such that
\begin{align}
\iq(AK) \geq \frac{\sqrt{n+1}}{n!\,2^{n/2}},\label{eqn_conj_gen_case}
\end{align}
with equality sign if and only if $AK$ is a regular simplex.

If we assume that $K\in\K^n_o$, then an $A \in \GL_n(\R)$ exists, such that
\begin{align}
\iq(AK) \geq \frac{1}{n!},\label{eqn_conj_sym_case}
\end{align}
with equality sign if and only if $AK$ is a regular crosspolytope.
\end{conjecture}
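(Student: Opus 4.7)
The plan is to transfer Makai's conjecture into a Dvoretzky--Rogers-type extremal problem by passing to Löwner position and invoking John's decomposition of the identity, and then to attack that problem using the Cauchy--Binet formula for minors of a matrix product.

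First I would choose an affine transformation (a translation followed by some $A \in \GL_n(\R)$) so that the minimum volume enclosing ellipsoid of $AK$ is the Euclidean unit ball $\B_2^n$; since the isodiametric quotient is translation-invariant, this is equivalent to what the conjecture asks. Immediately $\D(AK) \leq 2$, with equality in the $o$-symmetric case because the contact points of $AK$ with $\bS^{n-1}$ come in antipodal pairs. By John's theorem in Löwner position, there exist unit vectors $u_1,\dots,u_m \in \bd(AK) \cap \bS^{n-1}$ and positive weights $c_1,\dots,c_m$ satisfying $\sum_i c_i u_i u_i^\trans = I_n$, and consequently $\sum_i c_i = n$; in the general case one additionally has the barycentric condition $\sum_i c_i u_i = 0$.

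Next, setting $U = [u_1 \mid \cdots \mid u_m]$ and $C = \diag(c_1,\dots,c_m)$, I would apply the Cauchy--Binet formula to the identity $UCU^\trans = I_n$ to obtain
\[
\sum_{|S|=n} \biggl(\prod_{i \in S} c_i\biggr)\,\det([u_i]_{i \in S})^2 \;=\; 1.
\]
In the $o$-symmetric case, each $n$-subset $S$ generates an inscribed cross-polytope $\conv\{\pm u_i : i \in S\} \subseteq AK$ of volume $\frac{2^n}{n!}|\det([u_i]_{i \in S})|$; the aim is to combine this identity with $\sum c_i = n$ to derive the sharp inequality $\V(AK) \geq 2^n/n!$, which then yields $\iq(AK) \geq 1/n!$. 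For the general case I would work instead with $(n{+}1)$-subsets, producing inscribed simplices $\conv\{u_{i_0},\dots,u_{i_n}\} \subseteq AK$, and apply Cauchy--Binet to the augmented $(n{+}1)\times m$ matrix $\tilde U$ formed by prepending a row of ones to $U$: the barycentric condition ensures $\tilde U C \tilde U^\trans = \diag(n, I_n)$, and the resulting identity should give an exponential lower bound on the largest simplex volume and hence on $\iq(AK)$.

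The main obstacle is obtaining the sharp constant in the symmetric case. A naive combination of Cauchy--Binet with Maclaurin's inequality $e_n(c_1,\dots,c_m) \leq \binom{m}{n}(n/m)^n$ only yields $\max_S \det([u_i]_{i \in S})^2 \geq n!/n^n$, which translates to $\V(AK) \geq \frac{2^n}{n!}\sqrt{n!/n^n}$ — an exponential factor of order $e^{n/2}$ short of the target $2^n/n!$. Closing this gap seems to require exploiting \emph{all} $n$-subsets simultaneously, for instance by showing that the full contact polytope $P := \conv\{\pm u_i : i \in [m]\} \subseteq AK$ itself has volume at least $2^n/n!$. A plausible route is to pair the Brascamp--Lieb inequality, which gives $\V(P^\circ) \leq 2^n$ for such $P$, with a Mahler-type product bound $\V(P)\V(P^\circ) \geq 4^n/n!$ restricted to polytopes arising from John decompositions; alternatively, one could try a shadow-system or continuous-deformation argument that gradually reduces $(u_i, c_i)$ toward the orthonormal case $m = n$, $c_i = 1$, which realizes the regular crosspolytope as extremum and simultaneously pins down the equality case.
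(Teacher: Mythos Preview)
The statement is a conjecture, and the paper does \emph{not} prove the general case~\eqref{eqn_conj_gen_case}: it remains open for $n\ge 3$. Your Cauchy--Binet approach is essentially what the paper carries out in Section~4 (Theorem~4.5 and Corollary~4.7), and it yields exactly the asymptotic bound $\iq(AK)\gtrsim \sqrt{n+1}/(n!\,e^{n/2})$ with the same $e^{n/2}$ gap you identify. One difference in setup: the paper places the \emph{difference body} $\tfrac{1}{\D(K)}(K-K)$ in L\"owner position (this is shown to be equivalent to Behrend position, Theorem~2.3), so that the contact points are diametrical directions and the segments of length $\D(K)$ lie inside $K$; you place $K$ itself in L\"owner position and work with the augmented matrix and the barycentric condition. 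Both routes give the same asymptotic order, and neither is known to close the gap to the conjectured constant.

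For the $o$-symmetric case~\eqref{eqn_conj_sym_case}, your reduction to L\"owner position is correct and coincides with the paper's (since $K-K=2K$). The paper then simply invokes Barthe's theorem: any $o$-symmetric body in L\"owner position has volume at least $2^n/n!$, with equality only for the regular crosspolytope. Barthe's proof uses his \emph{reverse} Brascamp--Lieb inequality applied directly to $P=\conv\{\pm u_i\}$; this is the missing ingredient in your outline. Your proposed alternative---pairing the forward Brascamp--Lieb bound $\V(P^\circ)\le 2^n$ with a Mahler-type inequality $\V(P)\V(P^\circ)\ge 4^n/n!$---would require the Mahler conjecture for this class of polytopes, which is not known (these $P$ are neither zonotopes nor duals of zonotopes in general). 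The shadow-system idea is too vague to assess. So the genuine gap in your proposal for the symmetric case is that Cauchy--Binet alone cannot reach the sharp constant, and the correct tool is reverse Brascamp--Lieb (i.e., Barthe's result), not forward Brascamp--Lieb plus a volume-product bound.
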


To be more precise, a \emph{regular simplex} is a simplex all of whose edges have the same length.
A \emph{regular crosspolytope} in~$\R^n$ is the convex hull of $\pm u_1,\ldots,\pm u_n$, where $\{u_1,\ldots,u_n\}$ is an orthonormal basis of~$\R^n$.
While \cref{conj_rev_idi} is open for any dimension $n\geq3$, Makai Jr.~\cite[Lem.~2]{makai1978on} proved that there is always some $A \in \GL_n(\R)$ such that
\begin{align}
\iq(AK) &\geq \frac{\V(\conv\{\B^n_2,\pm(\sqrt{n},0,\ldots,0)^\intercal\})}{\binom{2n}{n}n^{n/2}}\approx\frac{\sqrt{n+1}}{n!\,8^{n/2}}.\label{eqn_Makai_estimate}
\end{align}
In this work, we relate the reverse isodiametric problem to minimal volume enclosing ellipsoids and to the Dvoretzky-Rogers-type problem of finding large volume simplices in any decomposition of the identity matrix.

The central definition for our investigations is the following:

\begin{definition}\label{def_Behrend_position}
A convex body $K\in\K^n$ is in \emph{isodiametric position} (or \emph{Behrend position}), if
\[
\iq(K) = \max_{A\in\GL_n(\R)} \iq(AK).
\]
\end{definition}

This definition is justified by standard arguments in convexity that show that the supremum of the isodiametric quotient of linear images of a fixed $K \in \K^n$ is always attained (see \cref{lem_attainment_Behrendposition}).
We prove in \cref{thm_uniqueness_Behrend_pos} that the Behrend position is unique up to rotations, scalings and translations.

In \cref{sect_behrend_loewner}, we make the crucial observation that a convex body~$K$ is in Behrend position if and only if its normalized difference body $(K-K)/\D(K)$ is in L\"owner position (see \cref{thm_equival_isod_lown}), which means that its volume minimal enclosing ellipsoid is the Euclidean unit ball.
This relationship allows to use a result of Barthe~\cite{barthe1998an} on the minimal volume of an $o$-symmetric convex body in L\"owner position, solving \cref{conj_rev_idi} in the $o$-symmetric case.

\begin{proposition}\label{thm_o-symm_rev_isod_ineq}
Let $K\in\K^n_o$ be in Behrend position. Then
\[
\iq(K)\geq\frac1{n!}.
\]
Equality holds if and only if $K$ is a regular crosspolytope.
\end{proposition}

Behrend observed that the directions of the line segments attaining the diameter of a planar convex body in isodiametric position correspond to a well-distributed point configuration on the unit circle.
In \cref{sect_distribution_lemma}, we show how his ideas can be extended to arbitrary dimension and use this information to significantly improve the asymptotic bound~\eqref{eqn_Makai_estimate} as follows.
The notation $f(n) \sim g(n)$ means that $\lim_{n \to \infty} f(n) / g(n) = 1$.

\begin{theorem}\label{thm_gen_rev_isod_ineq}
Let $K\in\K^n$ be in Behrend position. Then
\[
\iq(K)\geq\frac{1}{\sqrt{n!}\,n^{n/2}} \sim \left(\frac{2\pi}{n}\right)^{1/4} \frac{\sqrt{n+1}}{n!\,e^{n/2}}.
\]
\end{theorem}


An extremely useful result in Convex Geometry is the characterization of the L\"owner position in terms of the existence of a decomposition of the identity matrix as a non-negative linear combination of rank-one matrices.
Several authors have contributed to this characterization, starting with Behrend~\cite{behrend1937ueber} in the planar case, John \cite{john2014extremum} (whose original work goes back to 1948) in general dimension, and Ball \cite{ball1992ellipsoids}, who clarified the characterization of the ellipsoid by specific touching conditions (see also \cite{henk2012loewnerjohn,tomczak1989banach} for other historical references).
\cref{thm_equival_isod_lown} shows that, for convex bodies in Behrend position, such a decomposition is induced by the directions of line segments attaining the diameter.
The proof of \cref{thm_gen_rev_isod_ineq} in \cref{sect_distribution_lemma} uses crucially that we can find such diametrical directions which span a simplex of large volume.

These observations motivate our studies in \cref{sect_DR_bounds}, where we are interested in the following problem:
Given a decomposition of the $n \times n$ identity matrix into a sum of $m$ rank-one matrices of the form $uu^\intercal$, find $j$ of the decomposing vectors $u$ that together with the origin span a $j$-dimensional simplex of large volume.
The famous Dvoretzky-Rogers lemma from~\cite{dvoretzky1950Absolute} gives an estimate for the case $j=n$ which is however not sensitive to the parameter~$m$.
Writing $\DR(m,n,j)$ for the largest possible volume that can always be guaranteed (see \cref{Prob_dvorz_rog} for a precise definition), we use the Cauchy-Binet formula for the minors of a product of matrices and prove

\begin{theorem}\label{thm_improved_Dvor_rog_ineq}
Let $1 \leq j \leq n \leq m \leq \binom{n+1}{2}$. Then
\begin{align}
\DR(m,n,j)^2 \geq  \frac{\binom{n}{j}\left(\frac{m}{n}\right)^j}{\binom{m}{j}(j!)^2}.\label{eqn_intro_DR_bounds}
\end{align}
Moreover, the inequality is best possible for the triples
\begin{itemize}
 \item $(n,n,j)$, for $1\leq j\leq n$,
 \item $(n+1,n,j)$, for $1\leq j\leq n$, and
 \item $(\binom{n+1}{2},n,2)$, for $n\in\{2,3,7,23\}$, but not for any other $n \leq 118$.
\end{itemize}
\end{theorem}

The sharpness question for the triple $(\binom{n+1}{2},n,2)$ is related to the existence of a set of many equiangular lines in dimension~$n$.
In the particular case $(m,n,j)=(m,n,n)$, the bound~\eqref{eqn_intro_DR_bounds} was proven in \cite{pelczynski1991parallelepipeds} by similar arguments as ours, and recently was independently obtained in~\cite{fodornaszoditamas2018on} by probabilistic methods.
As a corollary to these Dvoretzky-Rogers-type bounds, we get a second proof of the asymptotic estimate in \cref{thm_gen_rev_isod_ineq} (see~\cref{cor:asymptoticBound_viaCB}).

We complement the discussion on the reverse isodiametric problem by studying its dual counterpart in \cref{sect_isominwidth_position}.
Replacing the diameter of $K$ by the minimum width $\w(K)$, we define the \emph{isominwidth quotient} by
\[
\iwq(K):=\frac{\V(K)}{\w(K)^n}.
\]
We then consider the reverse isominwidth problem, which asks for an upper bound on the minimum isominwidth quotient of a linear image of any given~$K \in \K^n$.
The strong duality between the diameter and the minimum width implies characterization results regarding $\iwq(K)$ that are analogous to those in \cref{sect_behrend_loewner}.

Finding good lower bounds on the quotient $\iwq(K)$ is an intricate and longstanding problem, most commonly known as P\'al's problem.
However, based on the experiences we made concerning the Behrend position, we are able to give a complete solution to the reverse isominwidth problem, which generalizes the respective statement of Behrend~\cite[p.~716, (II$_2$)]{behrend1937ueber} about the planar case.

\begin{theorem}\label{thm_isominwidth_reverse_ineq}
Let $K\in\K^n$. Then there exists $A \in \GL_n(\R)$ such that
\[
\iwq(AK)\leq 1.
\]
Moreover, equality holds if and only if $AK$ is a cube.
\end{theorem} 

\section{Convex bodies in Behrend position and the \texorpdfstring{$o$}{o}-symmetric reverse isodiametric inequality}\label{sect_behrend_loewner}

In this section, we establish a close relationship between the Behrend position and the well-known L\"owner position of a convex body.
As a result we obtain the solution to the reverse isodiametric problem for $o$-symmetric convex bodies.

Let us first justify the definition of the Behrend position by showing that the supremal isodiametric quotient among the linear images of a fixed convex body is always attained. We prove a standard compactness argument, analogous to the ones that had been proven
before for other special positions of convex sets (cf.~\cite{artstein2015asymptotic}).
We refer the reader to the textbook by Gruber~\cite[Ch.~6]{gruber2007convex} for information on the set of convex bodies as a metric space.

\begin{lemma}\label{lem_attainment_Behrendposition}
For every $K \in \K^n$, there exists an $A \in \GL_n(\R)$ such that
\[
\iq(AK) = \sup_{B \in \GL_n(\R)}\iq(BK).
\]
\end{lemma}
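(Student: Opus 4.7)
The plan is to take a maximizing sequence of matrices, normalize it using the scale-invariance of $\iq$, and then extract a subsequence converging to an invertible matrix by controlling the singular values. Since $\iq(\lambda A K) = \iq(AK)$ for every $\lambda > 0$, given a sequence $A_k \in \GL_n(\R)$ with $\iq(A_k K) \to \alpha := \sup_{B \in \GL_n(\R)} \iq(BK)$, I may rescale so that $\D(A_k K) = 1$ for every $k$. Note that $\alpha \geq \iq(K) > 0$, so in particular $\alpha$ is strictly positive.

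The two ingredients I need are an upper bound on the operator norm $\|A_k\|$ and a lower bound on $|\det A_k|$. For the first, pick any Euclidean ball $x_0 + \eps \B_2^n \subseteq K$ with $\eps > 0$ inside $K$; then $A_k K$ contains the ellipsoid $A_k x_0 + \eps A_k \B_2^n$, and since $\D(A_k K) = 1$, this ellipsoid has diameter at most one. Hence the largest singular value of $A_k$ is at most $1/(2\eps)$, yielding a uniform bound $\|A_k\| \leq 1/(2\eps)$. For the second, since $\V(A_k K) = |\det A_k|\,\V(K)$ and $\V(A_k K) = \iq(A_k K)\cdot \D(A_k K)^n \to \alpha$, we deduce $|\det A_k| \to \alpha/\V(K) > 0$. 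Combining both facts, the product of the singular values of $A_k$ is bounded below and the largest is bounded above, so all singular values lie in a fixed compact subinterval of $(0,\infty)$.

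Hence $\{A_k\}$ lies in a compact subset of $\GL_n(\R)$, and a subsequence $A_{k_j} \to A$ with $A$ invertible. Since the map $B \mapsto BK$ is continuous from $\R^{n\times n}$ to the space of convex bodies equipped with the Hausdorff metric, and since $\V$ and $\D$ are continuous functionals in that metric, we get $\iq(A_{k_j} K) \to \iq(AK)$, forcing $\iq(AK) = \alpha$. The main obstacle I would expect is preventing the maximizing sequence from degenerating to a singular limit matrix; this is handled by the lower bound on $|\det A_k|$, which in turn requires that the supremum $\alpha$ be strictly positive, a property inherited from $\iq(K) > 0$.
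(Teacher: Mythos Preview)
Your argument is correct. Both proofs are compactness--continuity arguments, but the normalizations are dual: the paper fixes $\abs{\det A}=1$ and then bounds $\D(AK)$ from above and below, whereas you fix $\D(A_kK)=1$ and control $\|A_k\|$ and $\abs{\det A_k}$ directly. The paper then applies Blaschke's selection theorem to the sequence of \emph{bodies} $AK$ and asserts that the limit is again of the form $\bar A K$ with $\bar A\in\calS_K$; this step is written somewhat informally there and in fact implicitly relies on exactly the kind of singular-value control you carry out explicitly. Your version, by working with the matrices themselves and showing the singular values lie in a fixed compact subinterval of $(0,\infty)$, makes the invertibility of the limit matrix transparent and avoids the extra appeal to Blaschke. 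The trade-off is modest: the paper's formulation highlights that one is really minimizing the diameter over a bounded family of bodies, while yours is slightly more self-contained.
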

\begin{proof}
First observe that by the scaling- and translation-invariance of the isodiametric quotient it suffices to consider $K \in \K^n$ containing the origin in their interior and linear maps $A$ that are volume-preserving.
Therefore,
\begin{align}
\sup_{A\in\GL_n(\R)}\frac{\V(AK)}{\D(AK)^n} = \frac{\V(K)}{\inf_{A \in \calS_K} \D(AK)^n},\label{eqn_attainment_1}
\end{align}
where
\[
\calS_K = \bigg\{A \in \GL_n(\R) : \det(A)=1 \text{ and } 2\left(\frac{\V(K)}{\kappa_n}\right)^{1/n} \leq \D(AK) \leq \D(K)\bigg\}.
\]
Note, that the lower bound on the diameter of $AK$ follows from~\eqref{eqn_isodiametric_inequality}.

Now, take a sequence of convex bodies $AK$, $A \in \calS_K$, whose diameters converge to the infimum in~\eqref{eqn_attainment_1}.
As we have fixed the origin to be contained in $K$ and by the definition of $\calS_K$, this sequence is bounded in the sense that all of its members are contained in a ball of diameter $\D(K)$.
In view of Blaschke's selection theorem (cf.~\cite[Thm.~6.3]{gruber2007convex}), there exists a convergent subsequence with limit $\bar K = \bar A K$, for $A \to \bar A \in \calS_K$.
By the continuity of the diameter function with respect to the Hausdorff distance, we get that $\D(\bar A K)^n = \inf_{A \in \calS_K} \D(AK)^n$, finishing the proof.
\end{proof}

\subsection{Behrend position versus L\"owner position}

In the sequel, we say that two points~$x$ and~$y$ in a convex body $K\in\mathcal K^n$ determine a \emph{diametrical segment} of~$K$ if $\D(K)=\|x-y\|$, and in this case we say that $(x-y)/\|x-y\|$ is a \emph{diametrical direction}.
We denote by
\begin{align*}
\D_K &= \left\{ u \in \bS^{n-1} : u\text{ is a diametrical direction of }K \right\}\\
& = \left\{ u \in \bS^{n-1} : \exists\,x\in K\text{ such that }x+\D(K)[0,u]\subseteq K \right\}
\end{align*}
the set of all diametrical directions of~$K$.
Note that $\D_K$ is $o$-symmetric.

A convex body $K\in\K^n$ is in \emph{L\"owner position} if $\B^n_2$ is
a minimum volume ellipsoid containing~$K$.
For background information, references, and a discussion of the history regarding the L\"owner (and John) position we refer the reader to the survey article by Henk~\cite{henk2012loewnerjohn}.

It is well-known that for every $K\in\K^n$, there exists an $A \in \GL_n(\R)$ and a translation $t \in \R^n$ such that $AK + t$ is in L\"owner position, and that the minimal volume ellipsoid containing $K$, called the \emph{L\"owner ellipsoid} of~$K$, is unique (cf.~\cite{artstein2015asymptotic,henk2012loewnerjohn}).
Moreover, the L\"owner position of a convex body is characterized by the existence of contact points that decompose the $n\times n$ identity matrix~$\I_n$.
More precisely,

\begin{theorem}[cf.~{\cite[Ch.~11]{gruber2007convex}}]\label{thm_lowner_position}
Let $K\in\K^n$ be such that $K\subseteq\B^n_2$.
The following are equivalent:
\begin{enumerate}[(i)]
 \item $K$ is in L\"owner position.
 \item There exists an $m \geq n$, contact points $u_1,\ldots,u_m \in \bd(K)\cap\bS^{n-1}$, and scalars $\lambda_1,\ldots,\lambda_m\geq 0$ such that
\[
\I_n=\sum_{i=1}^m\lambda_i u_iu_i^\intercal \qquad \text{and} \qquad \sum_{i=1}^m \lambda_i u_i = 0.
\]
\end{enumerate}
Moreover, if $K$ is $o$-symmetric, then the condition $\sum_{i=1}^m \lambda_i u_i = 0$ can be dropped, and one can choose $m \leq \binom{n+1}{2}$.
\end{theorem}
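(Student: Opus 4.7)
The plan is to prove the equivalence by leveraging the standard convex optimization description of the L\"owner ellipsoid.

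For the implication (ii) $\Rightarrow$ (i), I would argue directly that no other ellipsoid containing $K$ has smaller volume. Let $E = t + \{y \in \R^n : y^\intercal A y \leq 1\}$ with $A$ positive definite be any ellipsoid containing $K$. Since each contact point $u_i$ lies in $E$, one has $(u_i - t)^\intercal A (u_i - t) \leq 1$; multiplying by $\lambda_i$, summing over $i$, expanding, and substituting the hypotheses $\sum_i \lambda_i u_i u_i^\intercal = \I_n$ and $\sum_i \lambda_i u_i = 0$ gives
\[
\tr(A) + \Big(\sum_i \lambda_i\Big)\, t^\intercal A t \leq \sum_i \lambda_i.
\]
Taking traces of the identity decomposition together with $\|u_i\|=1$ yields $\sum_i \lambda_i = n$, whence $\tr(A) \leq n$. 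The AM-GM inequality then forces $\det(A) \leq 1$, equivalently $\V(E) \geq \V(\B_2^n)$, with equality only for $A = \I_n$ and $t = 0$, establishing minimality and uniqueness of the enclosing ball.

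For the converse (i) $\Rightarrow$ (ii), I would phrase the L\"owner ellipsoid as the solution of the convex program $\min -\log\det A$ subject to $(u - t)^\intercal A (u - t) \leq 1$ for all $u \in K$, over pairs $(A,t)$ with $A$ symmetric positive definite. At the optimum $(A,t) = (\I_n, 0)$, a first-order optimality argument would yield nonnegative multipliers $\lambda_i$ supported on active contact points $u_i \in \bd(K) \cap \bS^{n-1}$: differentiating in $A$ matches the gradient $-\I_n$ of $-\log\det A$ with $-\sum_i \lambda_i u_i u_i^\intercal$, giving the first identity, and differentiating in $t$ yields $\sum_i \lambda_i u_i = 0$. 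For an $o$-symmetric body, uniqueness of the L\"owner ellipsoid forces its center to be the origin, so the translation variable, and hence the second condition, drops out; Carath\'eodory's theorem in the $\binom{n+1}{2}$-dimensional space of symmetric matrices then trims the decomposition to at most $\binom{n+1}{2}$ terms.

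The main obstacle is the rigorous handling of the forward direction, since the feasibility constraints are semi-infinite (indexed by all $u \in K$). The standard remedy is a separation argument: if $\I_n$ did not lie in the convex cone spanned by $\{u u^\intercal : u \in \bd(K) \cap \bS^{n-1}\}$, one could exhibit a symmetric $H$ separating $\I_n$ from this cone and use $A_\epsilon = \I_n + \epsilon H$ (suitably rescaled to keep $K \subseteq E_\epsilon$) as a perturbation strictly decreasing the volume, contradicting minimality. Since this result is classical textbook material, I would be content to record the key identities above and refer to~\cite[Ch.~11]{gruber2007convex} for the technical details of the separation step and the Carath\'eodory reduction.
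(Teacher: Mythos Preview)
The paper does not give its own proof of this statement; it is recorded as a known characterization of the L\"owner position and simply cited from~\cite[Ch.~11]{gruber2007convex}. Your sketch is the standard textbook argument (AM--GM for (ii)$\Rightarrow$(i), first-order optimality/separation plus Carath\'eodory for (i)$\Rightarrow$(ii)), it is correct, and you yourself end by deferring to the same reference for the technical details---so you are in complete alignment with how the paper handles this result.
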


\noindent We are now set in order to state the main result of this section.

\begin{theorem}\label{thm_equival_isod_lown}
Let $K\in\K^n$.
The following are equivalent:
\begin{enumerate}[(i)]
\item $K$ is in Behrend position.
\item $K-K$ is in Behrend position.
\item $(K-K)/\D(K)$ is in L\"owner position.
\item $\conv(\D_K)$ is in L\"owner position.
\item There exists an $m\in\{n,\dots,\binom{n+1}{2}\}$, scalars $\lambda_1,\ldots,\lambda_m\geq 0$, and diametrical directions $u_1,\ldots,u_m \in \D_K$, such that
\[
\I_n=\sum_{i=1}^m\lambda_i u_iu_i^\intercal.
\]
\end{enumerate}
\end{theorem}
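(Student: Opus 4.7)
The plan is to prove the cycle $(i) \Leftrightarrow (ii) \Leftrightarrow (iii) \Leftrightarrow (iv) \Leftrightarrow (v)$, leveraging the L\"owner position characterization of \cref{thm_lowner_position} for the last three conditions. Throughout, set $L := K-K$ and $\tilde L := L/\D(K)$; note that $L$ is $o$-symmetric and, using $\D(L)=2\D(K)$, the body $\tilde L$ has diameter $2$ and is contained in $\B_2^n$.

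For $(i) \Leftrightarrow (ii)$, I would note that $A(K-K)=AK-AK$, so $\D(A(K-K)) = 2\D(AK)$ for every $A \in \GL_n(\R)$, and hence
\[
\iq\bigl(A(K-K)\bigr) = \frac{\V(K-K)}{2^n\V(K)} \cdot \iq(AK).
\]
The prefactor depends only on $K$, so the two isodiametric quotients have identical maximizers in $A$. For $(ii) \Leftrightarrow (iii)$, I would unpack the Behrend condition $\iq(BL) \leq \iq(L)$ as $\D(BL)^n \geq |\det B|\,\D(L)^n$ and rescale to reduce to the scale-invariant statement ``$|\det B| \leq 1$ whenever $\D(BL) \leq \D(L)$''. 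Matching this against the L\"owner characterization of $\tilde L$ via the substitution $B = A^{-1}$, and using that for the $o$-symmetric body $A^{-1}\tilde L$ containment in $\B_2^n$ is equivalent to the diameter bound $\D(A^{-1}\tilde L)\le 2$, this becomes ``$|\det A| \geq 1$ whenever $\tilde L \subseteq A\B_2^n$'', which is precisely L\"owner position for $\tilde L$ (its L\"owner ellipsoid is automatically origin-symmetric by uniqueness and the $o$-symmetry of $\tilde L$).

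For the equivalence of $(iii)$, $(iv)$, $(v)$, the structural core is the identity
\[
\D_K = \bS^{n-1} \cap \tilde L = \bS^{n-1} \cap \conv(\D_K).
\]
The first equality follows from the computation that a point $(x-y)/\D(K) \in \tilde L$ has unit norm exactly when $x,y$ realize the diameter of $K$. The second is a consequence of strict convexity of the Euclidean norm: any nontrivial convex combination of distinct unit vectors has norm strictly less than one, so the only points of $\conv(\D_K)$ on $\bS^{n-1}$ are those of $\D_K$ itself. Since $\conv(\D_K) \subseteq \tilde L \subseteq \B_2^n$ and both $\tilde L$ and $\conv(\D_K)$ are $o$-symmetric with the same sphere contact set $\D_K$, applying the $o$-symmetric case of \cref{thm_lowner_position} to each body shows that $(iii)$ and $(iv)$ are both equivalent to the existence of a decomposition $\I_n = \sum_{i=1}^m \lambda_i u_i u_i^\intercal$ with $\lambda_i \geq 0$, $u_i \in \D_K$, and $m \leq \binom{n+1}{2}$. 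The lower bound $m \geq n$ is forced by the rank of $\I_n$, yielding $(v)$.

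The main obstacle is Step $(ii) \Leftrightarrow (iii)$: one must carefully trace how diameters and determinants transform under linear maps and their inverses, and correctly handle the interplay between the three normalizations $L$, $\tilde L$, and the hypothetical enclosing ellipsoid $A\B_2^n$, in particular the factor of $2$ hidden in $\D(L)=2\D(K)$. Once the contact points of $\tilde L$ and $\conv(\D_K)$ with $\bS^{n-1}$ have both been identified with $\D_K$, the remaining equivalences fall out of \cref{thm_lowner_position} almost mechanically.
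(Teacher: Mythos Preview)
Your proposal is correct and follows essentially the same route as the paper: reduce to the difference body, identify Behrend with L\"owner position for the normalized $o$-symmetric body, and then read off the decomposition of the identity from the common contact set $\D_K = \tilde L \cap \bS^{n-1} = \conv(\D_K)\cap\bS^{n-1}$ via \cref{thm_lowner_position}. The only difference is organizational: for $(ii)\Leftrightarrow(iii)$ the paper first isolates two auxiliary lemmas---\cref{lem_low_det_1}, which characterizes L\"owner position by the condition that $K\nsubseteq M(\B_2^n)$ for every non-orthogonal volume-preserving $M$, and \cref{lem_lown_isodiam}, which deduces the Behrend--L\"owner equivalence from it---whereas you argue directly via the determinant--diameter inequality $\abs{\det B}\,\D(L)^n \le \D(BL)^n$ together with the observation that $A^{-1}\tilde L\subseteq \B_2^n$ is equivalent to $\D(A^{-1}\tilde L)\le 2$ for the $o$-symmetric $\tilde L$. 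Your packaging is slightly more streamlined, but the underlying idea is identical.
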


The proof of \cref{thm_equival_isod_lown} rests on two key lemmas for which we introduce some notation.
We write $\NO_1(n)$ for the set of non-orthogonal matrices $M \in \GL_n(\R) \setminus \O(n)$ with $\abs{\det(M)}=1$.
Geometrically, $\NO_1(n)$ contains all volume-preserving linear maps that do not keep the unit ball $\B_2^n$ invariant.
The following lemma is certainly folklore in the literature around volume-minimizing enclosing ellipsoids.
For completeness we provide the proof.

\begin{lemma}\label{lem_low_det_1}
Let $K\in\K_o^n$ be such that $K\subseteq\B^n_2$.
The following are equivalent:
\begin{enumerate}[(i)]
\item $K$ is in L\"owner position.
\item For every $M \in \NO_1(n)$, we have $K\nsubseteq M(\B^n_2)$.
\end{enumerate}
\end{lemma}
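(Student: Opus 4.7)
The plan is to combine the uniqueness of the minimum volume (L\"owner) ellipsoid with the basic observation that a centered ellipsoid $E$ satisfies $E=M(\B_2^n)$ with $\abs{\det M}=1$ if and only if $\V(E)=\V(\B_2^n)$, and that among such $M$ the equality $M(\B_2^n)=\B_2^n$ holds precisely when $M\in\O(n)$. I would establish both directions by contraposition.

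For $(i)\Rightarrow(ii)$ the argument is essentially immediate. Assume $K$ is in L\"owner position, so that $\B_2^n$ is the unique minimum volume ellipsoid containing $K$. If some $M\in\NO_1(n)$ satisfied $K\subseteq M(\B_2^n)$, then $M(\B_2^n)$ would be a second ellipsoid of volume $\V(\B_2^n)$ containing $K$, necessarily distinct from $\B_2^n$ since $M\notin\O(n)$; this contradicts the uniqueness of the L\"owner ellipsoid.

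For the converse $(ii)\Rightarrow(i)$, suppose $K$ is not in L\"owner position and let $E^*$ be its (unique) minimum volume enclosing ellipsoid. Since $K=-K$, the set $-E^*$ is also a minimum volume ellipsoid containing $K$, so uniqueness forces $E^*=-E^*$. Hence $E^*$ is centered and may be written $E^*=A(\B_2^n)$ for a symmetric positive definite matrix $A$; the failure of L\"owner position gives $\det A<1$. Rescaling to $M=(\det A)^{-1/n}A$ yields $\det M=1$ and $K\subseteq E^*\subseteq M(\B_2^n)$, and as long as $M\notin\O(n)$ this $M\in\NO_1(n)$ witnesses the failure of~(ii).

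The main obstacle is the degenerate situation in which the rescaled matrix $M$ ends up being orthogonal. Since $M$ is symmetric positive definite, this can only happen when $M=\I_n$, equivalently $A=c\,\I_n$ for some $c\in(0,1)$, so that $K\subseteq c\B_2^n$. I would bypass this by producing an explicit non-isotropic volume-preserving deformation: $M'=\diag(1/c,c,1,\ldots,1)\in\NO_1(n)$ (available for $n\geq 2$), whose smallest semi-axis equals $c$. Then $c\B_2^n\subseteq M'(\B_2^n)$, and hence $K\subseteq M'(\B_2^n)$, which completes the contrapositive.
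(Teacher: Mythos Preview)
Your proof is correct. Both directions match the paper's argument in spirit: $(i)\Rightarrow(ii)$ is identical (uniqueness of the L\"owner ellipsoid), and $(ii)\Rightarrow(i)$ uses the same diagonal perturbation trick $\diag(\varepsilon,\varepsilon^{-1},1,\ldots,1)$ to handle the degenerate case.

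The one organizational difference worth noting is in $(ii)\Rightarrow(i)$. The paper argues directly: it takes an arbitrary $M$ with $\abs{\det M}\leq 1$ and shows $K\nsubseteq M(\B_2^n)$ unless $M(\B_2^n)=\B_2^n$; along the way it must separately verify that (ii) forces a contact point $K\cap\bS^{n-1}\neq\emptyset$ (via a limiting argument with $\varepsilon\to 1$). You instead argue by contrapositive, starting from the actual L\"owner ellipsoid $E^*=A(\B_2^n)$ of $K$ and rescaling; your degenerate case then reduces cleanly to $A=c\,\I_n$, and you can plug $\varepsilon=c$ directly into the diagonal matrix rather than pass to a limit. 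This is marginally tidier, at the cost of invoking existence and uniqueness of the L\"owner ellipsoid up front (which the paper also uses, so there is no real difference in prerequisites).
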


\begin{proof}
In order to show (i) $\Rightarrow$ (ii), we use the fact that $\B^n_2$ is the \emph{unique} ellipsoid of volume $\V(\B^n_2)$ containing $K$.
Therefore, if $M \in \NO_1(n)$, then $M(\B^n_2)\neq\B^n_2$ and $\V(M(\B^n_2))=\V(\B^n_2)$, and hence we get $K\nsubseteq M(\B^n_2)$.

Now we show (ii) $\Rightarrow$ (i).
Let $M\in\GL_n(\R)$ be with $\abs{\det(M)}\leq 1$.
If $\abs{\det(M)}=1$, then either $M\in\O(n)$ (and then $M(\B^n_2)=\B^n_2$), or $M\notin\O(n)$, which then by (ii) implies $K\nsubseteq M(\B^n_2)$.
If, on the contrary, $\abs{\det(M)}<1$, we write $N:=\abs{\det(M)}^{-1/n}M$ and observe that $\abs{\det(N)}=1$.
Again, if $N\notin\O(n)$, then $K\nsubseteq N(\B^n_2) = \abs{\det(M)}^{-1/n}M(\B^n_2)$.
Since $\abs{\det(M)}<1$, we also have that $K\nsubseteq M(\B^n_2)$.
Finally, we suppose that $N\in\O(n)$. In order to verify that $K\nsubseteq M(\B^n_2)$, we make use of the fact that there exists a touching point $u\in K\cap \bS^{n-1}$ (we verify it at the end of the proof).
Indeed, under this assumption, in view of $\bS^{n-1}=N(\bS^{n-1})$, we have
$u\in K\cap N(\bS^{n-1}) = K \cap \abs{\det(M)}^{-1/n} M (\bS^{n-1})$, therefore $u \notin M(\B^n_2)$, and thus $K \nsubseteq M(\B^n_2)$, concluding the proof of (i).

As promised, we show that (ii) implies that $K\cap\bS^{n-1}\neq\emptyset$.
If, on the contrary, $K\subseteq\rho\B^n_2$, for some $\rho\in(0,1)$, we can consider the matrix $M_{\varepsilon}:=\diag(\varepsilon,\varepsilon^{-1},1,\dots,1)\in\GL_n(\R)\setminus\O(n)$, for $\varepsilon\in(0,1)$, which of course satisfies $\det(M_{\varepsilon})=1$.
Since $\lim_{\varepsilon\rightarrow 1}M_{\varepsilon}=\I_n$, there exists $\varepsilon_0$ close enough to $1$ such that $\rho\B^n_2\subseteq M_{\varepsilon_0}(\B^n_2)$, therefore implying that $K\subseteq \rho\B^n_2\subseteq M_{\varepsilon_0}(\B^n_2)$, and thus contradicting (ii).
\end{proof}

The second lemma has been shown by Behrend~\cite[Satz 7u \& 11u]{behrend1937ueber} in the case of the plane $n=2$.
Using \cref{lem_low_det_1}, Behrend's result extends to any dimension.

\begin{lemma}\label{lem_lown_isodiam}
Let $K\in\K_o^n$ be such that $\D(K)=2$.
The following are equivalent:
\begin{enumerate}[(i)]
\item $K$ is in Behrend position.
\item $K$ is in L\"owner position.
\end{enumerate}
\end{lemma}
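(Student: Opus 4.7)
The plan is to compare the two positions by exploiting the $o$-symmetry to rewrite everything in terms of containment in~$\B^n_2$. Since $K$ is $o$-symmetric, for any $x\in K$ we also have $-x\in K$, so $\D(K)=2\max_{x\in K}\|x\|$. In particular, the hypothesis $\D(K)=2$ is equivalent to $K\subseteq\B^n_2$ with $K\cap\bS^{n-1}\neq\emptyset$. Moreover, the isodiametric quotient is invariant under scalings, so $K$ is in Behrend position if and only if $\iq(AK)\leq\iq(K)$ for every $A\in\GL_n(\R)$ with $\abs{\det A}=1$; since volume is then preserved, this further simplifies to the condition
\[
\D(AK)\geq 2 \quad\text{for every }A\in\GL_n(\R)\text{ with }\abs{\det A}=1.
\]
By the same $o$-symmetry identity, $\D(AK)\geq 2$ is equivalent to the existence of some $x\in K$ with $\|Ax\|\geq 1$.

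For the implication (ii)$\Rightarrow$(i), I would fix $A\in\GL_n(\R)$ with $\abs{\det A}=1$ and set $M=A^{-1}$. If $M\in\O(n)$, then $AK$ is an isometric image of $K$ and $\D(AK)=\D(K)=2$. Otherwise $M\in\NO_1(n)$, and \cref{lem_low_det_1}, applied to the Löwner position of $K$, yields $K\not\subseteq M(\B^n_2)$. Picking $x\in K\setminus M(\B^n_2)$ gives $\|Ax\|>1$, and hence $\D(AK)\geq 2\|Ax\|>2$ by $o$-symmetry. Either way $\D(AK)\geq 2$, proving that $K$ is in Behrend position.

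For the reverse implication (i)$\Rightarrow$(ii), I would argue by contradiction, relying on the (standard) uniqueness of the minimum volume enclosing ellipsoid. Let $E^{*}$ be that unique ellipsoid for~$K$; since $-E^{*}$ is also a minimum volume ellipsoid containing $-K=K$, uniqueness forces $E^{*}=-E^{*}$, so $E^{*}$ is $o$-symmetric and can be written as $E^{*}=M(\B^n_2)$ for some $M\in\GL_n(\R)$. Assume now that $K$ is \emph{not} in Löwner position, i.e., $E^{*}\neq\B^n_2$. Combined with $K\subseteq\B^n_2$ and uniqueness, this forces $\V(E^{*})<\V(\B^n_2)$, i.e., $\abs{\det M}<1$. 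Then $M^{-1}K\subseteq\B^n_2$ gives $\D(M^{-1}K)\leq 2$, while at the same time
\[
\V(M^{-1}K)=\abs{\det M}^{-1}\V(K)>\V(K),
\]
so that $\iq(M^{-1}K)>\V(K)/2^{n}=\iq(K)$, contradicting the Behrend position of $K$.

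The main subtlety I anticipate is reconciling the strict versus weak diameter inequalities: being in Behrend position only delivers $\D(AK)\geq 2$ for all volume-preserving $A$, whereas being in Löwner position demands, through \cref{lem_low_det_1}, the strict inequality $\D(AK)>2$ whenever $A\notin\O(n)$. The ellipsoid step in (i)$\Rightarrow$(ii) is the crucial device for bridging this gap: passing from a volume-preserving map to an ellipsoid with $\abs{\det M}<1$ produces an extra multiplicative factor $\abs{\det M}^{-1}>1$ on the volume side, which converts the weak diameter bound into the desired strict violation of Behrend optimality.
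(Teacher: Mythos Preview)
Your proof is correct. The direction (ii)$\Rightarrow$(i) matches the paper's argument exactly: both invoke \cref{lem_low_det_1} to produce, for every $M\in\NO_1(n)$, a point $u\in K$ with $\|M^{-1}u\|>1$, hence $\D(M^{-1}K)>2$.

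For (i)$\Rightarrow$(ii) you take a slightly different route. The paper treats both implications simultaneously via the chain
\[
K\text{ L\"owner}\;\Leftrightarrow\;\forall M\in\NO_1(n)\ \exists u\in K:\ \|Mu\|\geq 1\;\Leftrightarrow\;\forall M\in\NO_1(n):\ \D(MK)\geq 2\;\Leftrightarrow\;K\text{ Behrend},
\]
leaning entirely on \cref{lem_low_det_1}. You instead argue directly from the uniqueness of the minimal-volume ellipsoid: if $K$ were not in L\"owner position, its L\"owner ellipsoid $E^\ast=M(\B^n_2)$ would satisfy $|\det M|<1$, and $M^{-1}K\subseteq\B^n_2$ forces $\iq(M^{-1}K)>\iq(K)$. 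This bypasses the (ii)$\Rightarrow$(i) direction of \cref{lem_low_det_1} altogether. Your closing remark about the strict-versus-weak issue is well taken: the paper's chain tacitly identifies ``$\exists u:\|Mu\|\geq 1$'' with ``$K\nsubseteq M^{-1}(\B^n_2)$'' (which is really $>1$), and your ellipsoid argument handles this cleanly by producing a genuine volume gain rather than relying on a strict diameter increase.
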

\begin{proof}
Since $\D(K) = 2$, we have $K \subseteq \B^n_2$.
Therefore, using \cref{lem_low_det_1},
\begin{align*}
& \phantom{\Leftrightarrow\,\,}\, K\text{ is in L\"owner position }\\
& \Leftrightarrow\, \text{for all } M \in \NO_1(n)\,\,\exists\, u\in K\text{ such that }\|M(u)\|\geq 1\\
& \Leftrightarrow\, \text{for all } M \in \NO_1(n): \D(M(K)) \geq 2 = \D(K)\\
& \Leftrightarrow\, K\text{ is in Behrend position.}\qedhere
\end{align*}
\end{proof}

\begin{proof}[Proof of \cref{thm_equival_isod_lown}]
(i) $\Leftrightarrow$ (ii). Since $M(K-K)=M(K)-M(K)$ for every $M \in \GL_n(\R)$, and $\D(K-K)=2\D(K)$, we can conclude that
\begin{align*}
& \phantom{\Leftrightarrow\,\,}\, K\text{ is in Behrend position}\\
& \Leftrightarrow\, \text{for all } M \in \NO_1(n): \D(M(K)) \geq \D(K)\\
& \Leftrightarrow\, \text{for all } M \in \NO_1(n): \D(M(K-K)) \geq \D(K-K)\\
& \Leftrightarrow\, K-K\text{ is in Behrend position.}
\end{align*}

(ii) $\Leftrightarrow$ (iii). This follows from \cref{lem_lown_isodiam}.

(iii) $\Leftrightarrow$ (iv). Observe that $u \in ((K-K)/\D(K)) \cap \bS^{n-1}$ if and only if $u\in \D_K$.
This means that the contact points $u_i$, $i\in[m]$, in \cref{thm_lowner_position}, belong to both $(K-K)/\D(K)$ and $\D_K$, which shows the claimed equivalence.

(iv) $\Leftrightarrow$ (v). Apply \cref{thm_lowner_position} to the body $\conv(D_K)$.
\end{proof}

In contrast to the Behrend position, it is in general not true that $K \in \K^n$ is in L\"owner position if and only if $(K-K)/\D(K)$ is.
The following proposition provides examples showing that in fact neither implication holds in general.

\begin{proposition}\
\begin{enumerate}[(i)]
 \item For $\frac{\sqrt{3}}{2} < r \leq 1$, the ``sailing boat''
\[
K_r= \conv\bigg\{\binom{0}{1},\binom{\pm\frac{\sqrt{3}}{2}}{-\frac12},\binom{\pm \sqrt{1-r^2}}{-r}\bigg\}
\]
is in L\"owner position, but $(K_r-K_r)/\D(K_r)$ is not.

 \item For $r \in [0,1)$, let $T_r = \conv\left\{(0,1)^\intercal,(\pm \sqrt{1-r^2},-r)^\intercal\right\}$ be a triangle, and let $Q_2 = \conv\{(\pm 1/\sqrt{2}, \pm 1/\sqrt{2})^\intercal\}$ be the axes parallel square inscribed into the unit circle.
Then, for $\varepsilon > 0$ small enough, the septagon
\[
K_\varepsilon = \conv\left\{(1-\varepsilon)Q_2, T_{1/2-\varepsilon}\right\}
\]
is not in L\"owner position, but $(K_\varepsilon-K_\varepsilon)/\D(K_\varepsilon)$ is.
\end{enumerate}
\end{proposition}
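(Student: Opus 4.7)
My plan in both parts is to invoke \cref{thm_equival_isod_lown}, in particular the equivalence (iii) $\Leftrightarrow$ (iv), to translate ``$\frac{1}{\D(K)}(K-K)$ is in L\"owner position'' into ``$\conv(\D_K)$ is in L\"owner position'', and to combine this with the decomposition characterization from \cref{thm_lowner_position}. Both bodies under consideration sit inside $\B_2^2$ and meet $\mathbb{S}^1$ only in a small set of vertices, so the L\"owner question for each reduces to whether those contact points admit a non-negative decomposition $\sum_i\lambda_iu_iu_i^\intercal=\I_2$ with $\sum_i\lambda_iu_i=0$, and the diametrical directions can be read off from the explicit coordinates once the longest edges are identified.

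For part (i), all (at most five) vertices of $K_r$ lie on $\mathbb{S}^1$, and in particular so do $(0,1)$ and $(\pm\tfrac{\sqrt{3}}{2},-\tfrac12)$, the vertices of the inscribed regular triangle. Setting $\lambda_1=\lambda_2=\lambda_3=\tfrac{2}{3}$ on these three contact points directly yields $\sum_i\lambda_iu_iu_i^\intercal=\I_2$ and $\sum_i\lambda_iu_i=0$, so $K_r$ is in L\"owner position. To handle the difference body I would compare all pairwise distances among the vertices of $K_r$ and use $r>\tfrac{\sqrt{3}}{2}$ to show that the unique diametrical pairs are $(0,1)$ with $(\pm\sqrt{1-r^2},-r)$, giving $\D(K_r)=\sqrt{2+2r}$. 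Normalizing these segments yields $\D_{K_r}=\{(\pm a,\pm b)\}$ with $a=\sqrt{(1-r)/2}$, $b=\sqrt{(1+r)/2}$, $a^2+b^2=1$, so $\conv(\D_{K_r})$ is an axis-aligned rectangle inscribed in $\mathbb{S}^1$. Its minimum-volume enclosing ellipse has semi-axes $a\sqrt{2},b\sqrt{2}$ and area $2\pi ab=\pi\sqrt{1-r^2}<\pi=\V(\B_2^2)$, so $\B_2^2$ is not minimal and $\conv(\D_{K_r})$ is \emph{not} in L\"owner position; \cref{thm_equival_isod_lown} then delivers the first claim.

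For part (ii), the shrunken square $(1-\varepsilon)Q_2$ lies in $\inter(\B_2^2)$, so $\bd(K_\varepsilon)\cap\mathbb{S}^1$ consists only of the three triangle vertices of $T_{1/2-\varepsilon}$. Using the left-right symmetry to force $\lambda_2=\lambda_3$ and eliminating $\lambda_1$ via $\sum_i\lambda_iu_i=0$, the matrix equation $\sum_i\lambda_iu_iu_i^\intercal=\I_2$ collapses to the single scalar constraint $\varepsilon(2\varepsilon-3)=0$, whose only non-negative solution near $0$ is $\varepsilon=0$; hence no admissible decomposition exists for small $\varepsilon>0$ and $K_\varepsilon$ is not in L\"owner position. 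For the difference body I would then pin down $\D(K_\varepsilon)$ by comparing three groups of pairwise distances: the diagonals of $(1-\varepsilon)Q_2$ measure $2(1-\varepsilon)$; the longest segment within $T_{1/2-\varepsilon}$ has length $2\sqrt{3/4+\varepsilon-\varepsilon^2}$, which is strictly less than $2(1-\varepsilon)$ for $0<\varepsilon<(3-\sqrt{7})/4$; and every square-to-triangle distance is continuous in $\varepsilon$ and, at $\varepsilon=0$, bounded above by $2\sin(82.5^\circ)<2$, hence stays below $2(1-\varepsilon)$ for $\varepsilon$ small. Thus $\D(K_\varepsilon)=2(1-\varepsilon)$, attained exactly on the two square diagonals, giving $\D_{K_\varepsilon}=\{\pm(1,\pm1)/\sqrt{2}\}$ and $\conv(\D_{K_\varepsilon})=Q_2$. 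The four vertices of $Q_2$ with equal weights $\tfrac12$ decompose $\I_2$, so $Q_2$ is in L\"owner position, and \cref{thm_equival_isod_lown} completes the proof.

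The main technical obstacle in both parts is the careful determination of $\D_{K_r}$ and $\D_{K_\varepsilon}$ via bounding many pairwise distances, and in part (ii) additionally choosing $\varepsilon$ small enough that $K_\varepsilon$ remains a heptagon and the square's diagonal keeps dominating. Once $\conv(\D_K)$ is identified as an explicit rectangle or square, verifying or refuting its L\"owner position is an elementary comparison of axis-aligned ellipse areas.
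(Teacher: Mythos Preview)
Your proof is correct and follows essentially the same architecture as the paper's: verify the L\"owner status of $K$ directly via contact points and \cref{thm_lowner_position}, identify $\D_K$ by comparing vertex distances, and then settle the L\"owner status of the normalized difference body via \cref{thm_equival_isod_lown}. Your computations (the diametrical directions in (i), the scalar constraint $\varepsilon(2\varepsilon-3)=0$ in (ii), the distance comparisons) all check out.

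There is one genuine methodological difference in part~(i). To show that $\frac{1}{\D(K_r)}(K_r-K_r)$ is \emph{not} in L\"owner position, the paper observes that the diametrical directions lie in an open arc of angle $\pi/2$, and invokes Behrend's diameter condition (\cref{lem_behrend}, a forward reference to Section~3) to conclude that the difference body is not in Behrend position, hence not in L\"owner position by \cref{thm_equival_isod_lown}. You instead use the equivalence (iii)$\Leftrightarrow$(iv) of \cref{thm_equival_isod_lown} to pass to $\conv(\D_{K_r})$, recognize it as the axis-aligned rectangle with vertices $(\pm a,\pm b)$, and exhibit the ellipse with semi-axes $a\sqrt{2},b\sqrt{2}$ of area $\pi\sqrt{1-r^2}<\pi$. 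Your route is more self-contained (no forward reference needed) and arguably more transparent; the paper's route illustrates the geometric content of the diameter condition. In part~(ii) the two proofs coincide, except that you carry out explicitly the decomposition-impossibility calculation that the paper leaves to the reader, and you argue via $\conv(\D_{K_\varepsilon})=Q_2$ where the paper uses the sandwich $Q_2\subseteq\frac{1}{\D(K_\varepsilon)}(K_\varepsilon-K_\varepsilon)\subseteq\B_2^2$; these are equivalent.
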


\begin{proof}
(i): Using \cref{thm_lowner_position}, one checks that the equilateral triangle~$T_2$ with vertices $(0,1)^\intercal$, $(\sqrt{3}/2,-1/2)^\intercal$, and $(-\sqrt{3}/2,-1/2)^\intercal$ is in L\"owner position (cf.~\cite[\S 7]{behrend1937ueber}).
Since $T_2 \subseteq K_r \subseteq \B^2_2$, the pentagon $K_r$ is in L\"owner position as well.

Now, $\D(K_r) = \|(0,1)^\intercal - (\pm\sqrt{1-r^2},-r)^\intercal \| = \sqrt{2(r+1)}$, and moreover the segments $[(0,1)^\intercal,(\pm\sqrt{1-r^2},-r)^\intercal]$ are the only diametrical segments of~$K_r$.
Hence, for $\sqrt{3}/2 < r \leq 1$,
\[
\left(\frac{1}{\D(K_r)}(K_r-K_r)\right) \cap \bS^1 = \left\{ \frac{1}{\sqrt{2(r+1)}}\left(\pm\sqrt{1-r^2},\pm(r+1)\right)^\intercal \right\},
\]
with the two $\pm$ signs being independent.
Therefore, the arc with midpoint~$e_1$ and angle $2\pi/3$ of the circle $\bS^1$ contains no diametrical direction of~$K_r$.
Thus, the diameter condition (\cref{lem_behrend}, cited from~\cite{behrend1937ueber}) implies that $(K_r-K_r)/\D(K_r)$ is not in Behrend position.
In view of \cref{thm_equival_isod_lown} this shows that $(K_r-K_r)/\D(K_r)$ is not in L\"owner position.

(ii): First of all, the unit circle $\B^2_2$ is the smallest circle containing $T_{1/2-\varepsilon}$, and hence it is the smallest circle containing $K_\varepsilon$.
The contact points $\bd(K_\varepsilon) \cap \bS^1$ are exactly the vertices of the triangle $T_{1/2-\varepsilon}$.
The reader quickly convinces herself that for $\varepsilon > 0$ these contact points do not provide a decomposition of the identity $\I_2$ according to \cref{thm_lowner_position}, so that $K_\varepsilon$ is not in L\"owner position (cf.~\cite[\S7]{behrend1937ueber}).

On the other hand, for $\varepsilon$ small enough, the diameter of $K_\varepsilon$ is attained by the opposite pairs of vertices of $(1-\varepsilon)Q_2$.
Therefore, $Q_2 \subseteq (K_\varepsilon-K_\varepsilon)/\D(K_\varepsilon) \subseteq \B^2_2$, and since $Q_2$ is in L\"owner position (cf.~\cite[Satz 11u]{behrend1937ueber}), the difference body $(K_\varepsilon-K_\varepsilon)/\D(K_\varepsilon)$ is as well.
\end{proof}

Based on \cref{thm_equival_isod_lown}, we can now give a succinct characterization of uniqueness of the Behrend position of a convex body.

\begin{proposition}\label{thm_uniqueness_Behrend_pos}
The Behrend position of a convex body is unique up to orthogonal transformations, scalings, and translations.
\end{proposition}
\begin{proof}
The isodiametric quotient of a convex body is clearly invariant under orthogonal transformations, scalings, and translations.
Hence, the property of a convex body to be in Behrend position is invariant under these transformations as well.

In order to show that this is an exhaustive list of such transformations, it suffices to consider $o$-symmetric convex bodies.
In fact in view of \cref{thm_equival_isod_lown}, $K$ is in Behrend position if and only if its difference set $K-K$ is.
Therefore, let $K \in \K_o^n$ be in Behrend position and furthermore let $K \subseteq \B^n_2$ and $\D(K)=2$, which deals with the freedom of scalings.
Now, let $A \in \GL_n(\R)$, with $\abs{\det(A)} = 1$, be such that $AK$ is in Behrend position as well.
Note that this implies that $\D(AK) = 2$.
By \cref{lem_lown_isodiam}, both $K$ and $AK$ are in L\"owner position.
In particular, $K \subseteq A^{-1}\B^n_2$ and $\B^n_2$ is the unique minimal volume ellipsoid containing $K$.
Hence, $A^{-1} \B^n_2 = \B^n_2$ and thus $A$ is an orthogonal transformation.
\end{proof}

\subsection{The \texorpdfstring{$o$}{o}-symmetric reverse isodiametric inequality}

We conclude this section with a proof of \cref{conj_rev_idi} for $o$-symmetric convex bodies.

\begin{proof}[Proof of \cref{thm_o-symm_rev_isod_ineq}]
A crucial result of Barthe~\cite{barthe1998an} states that for every $L \in \K_o^n$ in L\"owner position, we have
\[
\V(L) \geq \frac{2^n}{n!},
\]
and equality holds if and only if $L$ is a regular crosspolytope.

Since $K$ is in Behrend position, \cref{thm_equival_isod_lown} implies that $(K-K)/\D(K) = (2/D(K))K$ is in L\"owner position.
Hence, we obtain
\[
\iq(K) = \frac{\V(K)}{\D(K)^n}=\frac{1}{2^n}\V\left(\frac{2}{\D(K)}K\right)\geq\frac{1}{n!},
\]
and the equality case characterization follows from that of Barthe.
\end{proof}

\section{The diametrical directions of a body in Behrend position are well-distributed}\label{sect_distribution_lemma}

In contrast to the $o$-symmetric situation, a complete solution of Makai Jr.'s conjecture for arbitrary convex bodies $K \in \K^n$ still seems to be elusive.
However, in the following we make significant progress on asymptotic bounds on the isodiametric quotient of a convex body in Behrend position.

As discussed in the introduction, Behrend obtained an optimal result in the plane.
Based on his ideas, we show that in isodiametric optimal position, the diametrical directions of a convex body give rise to a well-distributed point set on the sphere.
Once this distribution property is established, a strong asymptotic bound follows easily.

At the heart of Behrend's arguments lies his \emph{diameter condition}:

\begin{lemma}[{\cite[p.~733, Satz 9u]{behrend1937ueber}}]\label{lem_behrend}
For every $K\in\K^2$ in Behrend position, each closed right angular domain contains some diametrical direction of~$K$.
\end{lemma}

It turns out that Behrend's proof and therefore this property can be generalized to higher dimensions.
In order to state the extension, we define the angle between a linear subspace $L$ and a non-zero vector $v \in \R^n$ as
\[
\sphericalangle(L,v) = \min_{z \in L \setminus\{0\}}\left( \arccos{\frac{z^\intercal v}{\|z\| \|v\|}} \right).
\]

\begin{lemma}\label{ExtBehrend}
Let $K\in\K^n$ be in Behrend position, let $\D_K \subseteq \bS^{n-1}$ be the set of diametrical directions of~$K$, and let $1 \leq i \leq n-1$.
Then, for every $i$-dimensional linear subspace~$L$,
\begin{enumerate}[(i)]
 \item there is some $v\in \D_K$ such that $\sphericalangle(L,v)\leq\arccos(\sqrt{i/n})$, and
 \item there is some $w\in \D_K$ such that $\sphericalangle(L,w)\geq\arccos(\sqrt{i/n})$.
\end{enumerate}
Moreover, the cube $C_n = [-1,1]^n$ and the subspaces $L_i = \lin\{e_1,\dots,e_i\}$, where $e_i$ denotes the $i$th coordinate unit vector, show that the bounds cannot be improved.
\end{lemma}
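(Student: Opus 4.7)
The plan is to reduce the statement to a weighted averaging argument via the identity-decomposition characterization of the Behrend position supplied by \cref{thm_equival_isod_lown}.

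Since $K$ is in Behrend position, part (v) of \cref{thm_equival_isod_lown} furnishes scalars $\lambda_1,\ldots,\lambda_m\geq 0$ and diametrical directions $u_1,\ldots,u_m\in\D_K$ such that $\I_n=\sum_{j=1}^m\lambda_j u_j u_j^\intercal$. Taking traces yields $\sum_j\lambda_j=n$. Fix an $i$-dimensional linear subspace $L\subseteq\R^n$ and let $P_L$ be the orthogonal projection onto~$L$. Applying $P_L$ to the identity decomposition and taking traces, one gets
\[
i=\tr(P_L)=\sum_{j=1}^m\lambda_j\,\tr(P_L u_j u_j^\intercal)=\sum_{j=1}^m\lambda_j\,\|P_L u_j\|^2,
\]
so the numbers $\|P_L u_j\|^2$ have weighted average $i/n$ with respect to the weights $\lambda_j$.

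By the usual max/min-vs-average principle, some index $j_+$ satisfies $\|P_L u_{j_+}\|^2\geq i/n$, and some index $j_-$ satisfies $\|P_L u_{j_-}\|^2\leq i/n$. Since each $u_j$ is a unit vector, the closest point on $L$ to $u_j$ is $P_L u_j$, whence $\cos\sphericalangle(L,u_j)=\|P_L u_j\|$ and $\sphericalangle(L,u_j)\in[0,\pi/2]$. Taking $v:=u_{j_+}$ and $w:=u_{j_-}$ therefore gives the two claims (i) and (ii).

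For sharpness, consider $C_n=[-1,1]^n$. Its difference body is $2C_n$, its diameter equals $2\sqrt{n}$, and the normalized difference body $\frac{1}{\D(C_n)}(C_n-C_n)=\frac{1}{\sqrt n}C_n$ has $\B_2^n$ as its minimum volume circumscribed ellipsoid; by part (iii) of \cref{thm_equival_isod_lown}, $C_n$ is in Behrend position. The diametrical directions of $C_n$ are precisely the vectors $v_\varepsilon=\frac{1}{\sqrt n}(\varepsilon_1,\ldots,\varepsilon_n)^\intercal$ with $\varepsilon_k\in\{\pm 1\}$. For $L_i=\lin\{e_1,\ldots,e_i\}$, a direct computation gives $\|P_{L_i} v_\varepsilon\|^2=i/n$ for every $\varepsilon$, and hence $\sphericalangle(L_i,v_\varepsilon)=\arccos(\sqrt{i/n})$ for all diametrical directions $v_\varepsilon$ simultaneously. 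Thus neither the upper bound in (i) nor the lower bound in (ii) can be improved.

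The only real subtlety is verifying that $\cos\sphericalangle(L,u_j)=\|P_L u_j\|$ with the paper's definition of $\sphericalangle$, which follows because the minimization over $z\in L\setminus\{0\}$ in the definition of the angle is achieved (up to sign) at $z=P_L u_j$ whenever $P_L u_j\neq 0$; the degenerate case $P_L u_j=0$ corresponds to $\sphericalangle(L,u_j)=\pi/2$ and is automatically consistent. Beyond this minor check, the proof is essentially a one-line trace computation together with an explicit verification on the cube.
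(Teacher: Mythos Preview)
Your proof is correct and takes a genuinely different route from the paper's. The paper argues (i) by contradiction via a variational perturbation: assuming all diametrical directions make too large an angle with~$L$, it applies the volume-preserving map $A_\varepsilon=\diag(1,\dots,1,1-\varepsilon,\dots,1-\varepsilon)$ (after rotating $L$ to a coordinate subspace) and shows that $\D(A_\varepsilon K)^n$ decreases faster than $\V(A_\varepsilon K)$, contradicting the Behrend position; part (ii) then follows by passing to~$L^\perp$. You instead invoke \cref{thm_equival_isod_lown}(v) up front and reduce both parts to the single trace identity $\sum_j\lambda_j\|P_L u_j\|^2=\tr(P_L)=i$ together with $\sum_j\lambda_j=n$, so that the weighted average of $\|P_L u_j\|^2$ equals $i/n$ and both the max and the min over~$j$ with $\lambda_j>0$ are pinned accordingly. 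Your approach is shorter and more conceptual, and it immediately yields the Dvoretzky--Rogers-type statement for \emph{any} decomposition of the identity (a connection the paper only remarks on after the lemma, at the start of \cref{sect_DR_bounds}). The paper's argument, by contrast, is self-contained in the sense that it does not rely on the L\"owner/John machinery behind \cref{thm_equival_isod_lown}; since that theorem is established earlier and independently, there is no circularity in your route. Your treatment of the sharpness via the cube is essentially the same as the paper's.
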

\begin{proof}
(i): Let $L$ be a fixed $i$-dimensional linear subspace.
For the sake of contradiction, we assume that $\sphericalangle(L,v)>\arccos(\sqrt{i/n})$, for every $v\in \D_K$.
This means, that $|u^\intercal v|<\sqrt{i/n}$, for every $v\in \D_K$ and every $u\in L\cap\bS^{n-1}$.
Hence, a given diametrical direction $v\in \D_K$ encloses an angle
\[
\omega = \frac{\pi}{2} - \arccos(u^\intercal v) = \arcsin(u^\intercal v) < \arcsin\big(\sqrt{\tfrac{i}{n}}\big)
\]
with the $(n-i)$-plane $L^\perp$ orthogonal to $L$.
By $\cos(\arcsin(x))=\sqrt{1-x^2}$, this implies that $\cos^2\omega>(n-i)/n$.
Since $K$ is compact there exists a $\delta>0$ such that for every $v\in \D_K$ with corresponding angle $\omega$ we have
\begin{align}
\cos^2\omega&>\frac{n-i}{n}(1+\delta).\label{eqn_cos2_bound}
\end{align}
Via a suitable rotation of $K$, we assume that $L=\lin\{e_1,\ldots,e_i\}$.
For a small $\varepsilon>0$, we consider the linear map $A_\varepsilon = \diag(1,\ldots,1,1-\varepsilon,\ldots,1-\varepsilon)\in\GL_n(\R)$ having its first $i$ entries equal to $1$.
Using elementary trigonometry, we see that the length of a line segment $\ell$ that encloses an angle $\omega$ with~$L^\perp$, changes under the transformation $A_\varepsilon$ according to the formula
\begin{align}
\|A_\varepsilon \ell\| & =\|\ell\|\sqrt{1-2\varepsilon\cos^2\omega+\varepsilon^2\cos^2\omega} = \|\ell\|\left(1-\varepsilon\cos^2\omega+O(\varepsilon^2)\right).\label{eqn_line_segment_change}
\end{align}
Let $K'=A_\varepsilon K$.
By compactness of $K$, we can choose $\varepsilon$ small enough such that $\sphericalangle(L,v')>\arccos(\sqrt{i/n})$ for every diametrical direction $v' \in \D_{K'}$ of~$K'$ as well.
Also, every diametrical direction $v' \in \D_{K'}$ encloses an angle~$\omega$ with~$L^\perp$ also satisfying~\eqref{eqn_cos2_bound} for some $\delta > 0$.
Thus, if~$\ell$ is a line segment whose image under $A_\varepsilon$ attains $\D(K')$ and which encloses an angle of $\omega$ with $L^\perp$, we get by~\eqref{eqn_line_segment_change} that
\begin{align}
\D(K') & =\|\ell\|\left(1-\varepsilon\cos^2\omega+O(\varepsilon^2)\right) \leq \D(K)\left(1-\varepsilon\cos^2\omega+O(\varepsilon^2)\right).\label{eqn_diameter_change}
\end{align}
Clearly, we have $\V(K')=(1-\varepsilon)^{n-i}\V(K)$, and therefore for $\varepsilon$ small enough
\begin{align*}
\iq(K') &= \frac{\V(K')}{\D(K')^n}\overset{\eqref{eqn_diameter_change}}{\geq}\frac{\V(K)}{\D(K)^n}\frac{(1-\varepsilon)^{n-i}}{\left(1-\varepsilon\cos^2\omega+O(\varepsilon^2)\right)^n}\\
&=\iq(K)\,\frac{1-(n-i)\varepsilon+O(\varepsilon^2)}{1-n\varepsilon\cos^2\omega+O(\varepsilon^2)}\\
&\overset{\eqref{eqn_cos2_bound}}{>}\iq(K)\,\frac{1-(n-i)\varepsilon+O(\varepsilon^2)}{1-(n-i)\varepsilon(1+\delta)+O(\varepsilon^2)}\\
&> \iq(K).
\end{align*}
This is in contradiction that $K$ is in Behrend position and hence proves our claim.

(ii): The statement (ii) holds for the $i$-dimensional linear subspace $L$ if and only if (i) holds for its orthogonal complement~$L^\perp$.
Indeed, by (i) there exists some $w \in \D_K$ such that $\sphericalangle(L^\perp,w)\leq\arccos(\sqrt{(n-i)/n})$.
Therefore,
\[
\sphericalangle(L,w) = \frac{\pi}{2}-\sphericalangle(L^\perp,w) \geq \arcsin\big(\sqrt{\tfrac{n-i}{n}}\big) = \arccos\big(\sqrt{\tfrac{i}{n}}\big),
\]
in view of the identity $\arcsin(x)=\arccos(\sqrt{1-x^2})$.

We conclude the proof by showing that the cube $C_n = [-1,1]^n$ does not allow for a smaller angle than $\arccos(\sqrt{i/n})$ in (i).
First of all, $C_n$ is in L\"owner position (cf.~\cite[Sect.~2]{henk2012loewnerjohn}), and thus by \cref{thm_equival_isod_lown}, it is also in Behrend position.
The diametrical directions of $C_n$ are precisely its vertex directions.
For the linear subspace $L_i = \lin\{e_1,\dots,e_i\}$ and any vertex $v \in \{-1,1\}^n$ of $C_n$, we have
\[
\sphericalangle(L_i,v) = \min_{z \in L_i \setminus\{0\}}\left( \arccos{\frac{z^\intercal v}{\|z\| \|v\|}} \right) = \arccos\bigg(\frac{z_v^\intercal v}{\|z_v\| \sqrt{n}}\bigg) = \arccos\big(\sqrt{\tfrac{i}{n}}\big),
\]
where $z_v = (v_1,\ldots,v_i,0,\ldots,0)^\intercal$.
Hence, the inequalities in (i) and~(ii) cannot be improved in general.
\end{proof}

\begin{remark}\
\begin{enumerate}[(i)]
 \item Since $\arccos(\sqrt{1/2})=\pi/4$, we retrieve Behrend's diameter condition by \cref{ExtBehrend}~(ii), for $n=2$.

 \item For $u \in \bS^{n-1}$ and $\varphi \geq 0$, let $C(u,\varphi)=\{v \in \bS^{n-1} : \sphericalangle(u,v) \leq \varphi\}$ be the \emph{spherical cap} with center $u$ and angle $\varphi$.
 The case $i=1$ of \cref{ExtBehrend}~(i) then says that the caps of radius $\arccos(\sqrt{1/n})$ and with centers at the diametrical directions of $K$ induce a spherical covering, that is,
 \[
 \bS^{n-1} = \bigcup_{u \in \D_K} C(u,\arccos(\sqrt{1/n})).
 \]
\end{enumerate}
\end{remark}

A consequence of \cref{ExtBehrend} is \cref{thm_gen_rev_isod_ineq}, which is an asymptotic lower bound on the isodiametric quotient of a convex body in Behrend position that improves dramatically upon Makai Jr.'s original estimate~\eqref{eqn_Makai_estimate}.

\begin{proof}[Proof of \cref{thm_gen_rev_isod_ineq}]
The idea of the proof is to use \cref{ExtBehrend} in order to guarantee the existence of diametrical directions of~$K$ that span a simplex of large volume.

More precisely, let $v_1\in \D_K$ be chosen arbitrarily.
In view of \cref{ExtBehrend}~\romannumeral2), by induction, for every $1\leq i \leq n-1$, there exists a diametrical direction $v_{i+1} \in \D_K$ such that $\sphericalangle(L_i,v_{i+1}) \geq \arccos(\sqrt{i/n})$, where $L_i = \lin\{v_1,\ldots,v_i\}$.
By definition of $\D_K$, there are translation vectors $t_1,\ldots,t_n \in \R^n$ such that the segment $S_i = t_i+\D(K)[0,v_i]$ is contained in $K$, for $1\leq i \leq n$.
Clearly, the volume of~$K$ is then lower bounded by the volume of $\conv\{S_1,\ldots,S_n\}$.
A result of Groemer~\cite{groemer1966zusammen} (cf.~\cite[Thm.~2]{betkehenk1993approx}) says that this volume is minimal if the line segments have a common endpoint.
That is,
\begin{align*}
\V(K) &\geq \vol(\conv\{S_1,\ldots,S_n\}) \geq \vol(\conv\{S_1-t_1,\ldots,S_n-t_n\})\\
&= \frac{\D(K)^n}{n!}|\det(v_1,\ldots,v_n)|=\frac{\D(K)^n}{n!}\prod_{i=1}^{n-1}\sin(\sphericalangle(L_i,v_{i+1}))\\
&\geq\frac{\D(K)^n}{n!}\prod_{i=1}^{n-1}\sin(\arccos(\sqrt{i/n}))=\frac{\D(K)^n}{n!}\prod_{i=1}^{n-1}\sqrt{1-\frac{i}{n}}\\
&=\frac{\D(K)^n}{\sqrt{n!}\,n^{n/2}},
\end{align*}
where we also used that $\sin(\arccos(x))=\sqrt{1-x^2}$.
The asymptotics of this bound follow from Stirling's approximation of the factorial function.
\end{proof}

\section{Dvoretzky-Rogers-type volume bounds}\label{sect_DR_bounds}

Motivated by \cref{ExtBehrend} and its relevance to the reverse isodiametric problem, we investigate an extension of the famous Dvoretzky-Rogers lemma.

\begin{theorem}[Dvoretzky \& Rogers~\cite{dvoretzky1950Absolute}]
\label{thm:DRlemma}
Let $u_1,\ldots,u_m\in\bS^{n-1}$ and let $\lambda_1,\ldots,\lambda_m\geq 0$ be such that $\sum_{i=1}^m\lambda_i u_iu_i^\intercal=\I_n$.
Then there is a subset $\{u_{j_1},\ldots,u_{j_n}\} \subseteq \{u_1,\ldots,u_m\}$ such that
\[
\sphericalangle(L_i,u_{j_{i+1}}) \geq \arccos(\sqrt{i/n}), \quad \text{for} \quad 1 \leq i \leq n-1,
\]
where $L_i = \lin\{u_{j_1},\ldots,u_{j_i}\}$.
\end{theorem}

The characterization of the Behrend position in \cref{thm_equival_isod_lown} shows that \cref{ExtBehrend} is actually a generalization of this result in the sense that for \emph{every} $i$-dimensional linear subspace~$L$, there is an index $1 \leq j \leq m$ such that $\sphericalangle(L,u_j) \geq \arccos(\sqrt{i/n})$.

However, with regard to the reverse isodiametric problem, we are interested in finding a simplex $S = \conv\{0,u_{j_1},\dots,u_{j_n}\}$ that is spanned by a choice of the decomposing vectors $u_i$, and which has a large volume.
This motivates a more general Dvoretzky-Rogers-type problem derived from the following definition, which asks to find $j$-dimensional simplices of large volume in any decomposition of the identity.

\begin{definition}\label{Prob_dvorz_rog}
For $1 \leq j \leq n \leq m$, let $\DR(m,n,j)$ be the largest number $\nu \geq 0$ such that, for every $u_1,\ldots,u_m \in \bS^{n-1}$ and $\lambda_1,\ldots,\lambda_m \geq 0$ with $\I_n=\sum_{i=1}^m\lambda_i u_iu_i^\intercal$, there exist indices $1 \leq i_1 < \ldots < i_j \leq m$ fulfilling
\[
\vol_j(\conv\{0,u_{i_1},\dots,u_{i_j}\})\geq \nu.
\]
\end{definition}
\noindent A couple of remarks regarding the constants $\DR(m,n,j)$ are in order:
\begin{itemize}
 \item The constant $\DR(m,n,j)$ is non-increasing in $m$, because every decomposition of $\I_n$ into $m$ summands can be turned into one with $m+1$ summands.

 \item For every decomposition $\I_n=\sum_{i=1}^m\lambda_i u_iu_i^\intercal$ of the identity with $m \geq \binom{n+1}{2}$ vectors, there is a subset
$u_{i_1},\dots,u_{i_\ell}$, for some $\ell \leq \binom{n+1}{2}$, that also decomposes the identity (cf.~\cref{thm_lowner_position}).
 Hence
$\DR(m,n,j) = \DR(\binom{n+1}{2},n,j)$, for $m \geq \binom{n+1}{2}$.

 \item The proof of \cref{thm_gen_rev_isod_ineq} shows that the Dvoretzky-Rogers lemma implies
 \begin{align}
 \DR(m,n,n) \geq \frac{1}{\sqrt{n!}\,n^{n/2}},\label{eqn_DR_bound}
 \end{align}
 which is however not sensitive to the value of~$m$.
\end{itemize}

\noindent In the following, we use the classical Cauchy-Binet formula for the minors of a product of two matrices in order to provide estimates on $\DR(m,n,j)$ in terms of $m$, $n$, and~$j$.
The obtained bounds improve in particular the Dvoretzky-Rogers bound~\eqref{eqn_DR_bound} on $\DR(m,n,n)$ and they turn out to be sharp for interesting families of triples $(m,n,j)$.
For the sake of notation, we write $[n] = \{1,\ldots,n\}$ for the set of the first $n$ natural numbers, and $\binom{[n]}{i}$ for the family of all $i$-element subsets of $[n]$.
Given a matrix $M \in \R^{n\times m}$ and index sets $I \in \binom{[n]}{i}$ and $J \in \binom{[m]}{j}$, we denote by $M_{I,J}$ the submatrix of $M$ which remains after we delete all rows of $M$ with indices not in~$I$, and all columns of $M$ with indices not in~$J$.

\begin{theorem}[Cauchy-Binet formula, {cf.~\cite[Ch.~4]{broidawilliamson1989a}}]\label{thm_cauc_binet}
Let $A\in\R^{n\times m}, B\in\R^{m\times n}$, and let $I,J\in\binom{[n]}{i}$.
Then
\[
\det((AB)_{I,J})=\sum_{K\in\binom{[m]}{i}}\det(A_{I,K})\det(B_{K,J}).
\]
\end{theorem}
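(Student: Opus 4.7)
The plan is to derive the identity directly from the Leibniz expansion, using the symbol $p := i$ for the common cardinality of the row/column index sets to avoid clash with a running index. Writing $I = \{i_1 < \cdots < i_p\}$ and $J = \{j_1 < \cdots < j_p\}$, the left-hand side expands as
\[
\det((AB)_{I,J}) \;=\; \sum_{\sigma \in S_p} \sgn(\sigma) \prod_{k=1}^{p} (AB)_{i_k,\, j_{\sigma(k)}} \;=\; \sum_{\sigma \in S_p} \sgn(\sigma) \prod_{k=1}^{p} \sum_{\ell=1}^{m} A_{i_k,\ell}\, B_{\ell,\, j_{\sigma(k)}}.
\]
Distributing the product over $k$ would turn the inner sums into a single sum over functions $f\colon [p] \to [m]$, producing
\[
\det((AB)_{I,J}) \;=\; \sum_{f\colon [p]\to[m]} \Bigl(\prod_{k=1}^p A_{i_k,f(k)}\Bigr)\, \sum_{\sigma \in S_p} \sgn(\sigma) \prod_{k=1}^{p} B_{f(k),\, j_{\sigma(k)}}.
\]

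Next I would observe that the inner $\sigma$-sum is the determinant of the $p\times p$ matrix with $(k,l)$-entry $B_{f(k),j_l}$, and therefore vanishes whenever $f$ is not injective, since two of its rows then coincide. It remains to handle injective $f$, each of which is uniquely encoded by the pair $(K,\tau)$ with $K = f([p]) \in \binom{[m]}{p}$ and $\tau \in S_p$ determined by $f(k) = k_{\tau(k)}$ for $K = \{k_1 < \cdots < k_p\}$. The substitution $\rho = \sigma \tau^{-1}$ in the inner sum produces $\sgn(\sigma) = \sgn(\rho)\sgn(\tau)$ and reindexes the $B$-product via $k' = \tau(k)$, so the $\sigma$-sum collapses to $\sgn(\tau)\det(B_{K,J})$. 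Regrouping the full sum by $K$ first, and then over $\tau\in S_p$, extracts $\det(A_{I,K})$ from the $A$-factor and yields
\[
\det((AB)_{I,J}) \;=\; \sum_{K \in \binom{[m]}{p}} \det(A_{I,K})\, \det(B_{K,J}).
\]

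The main obstacle in this approach is the sign bookkeeping that links the pair $(K,\tau)$ back to the original permutation $\sigma$: one has to verify carefully that $\rho\mapsto\rho\tau$ is a bijection on $S_p$ and that the reordering of the $B$-rows by $\tau$ contributes precisely the factor $\sgn(\tau)$ needed to reassemble $\det(A_{I,K})$ on the $A$-side. A coordinate-free alternative would be to exploit the functoriality of the exterior power, $\wedge^{p}(AB) = (\wedge^{p} A)\circ(\wedge^{p} B)$, and to read off the matrix entries of both sides in the natural bases of $\wedge^{p}\R^n$ and $\wedge^{p}\R^m$ indexed by $\binom{[n]}{p}$ and $\binom{[m]}{p}$; this is conceptually cleaner but introduces exterior-algebra machinery that is not used elsewhere in the paper, so the direct expansion above is the preferred route.
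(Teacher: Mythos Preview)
The paper does not actually prove \cref{thm_cauc_binet}: it is stated as a classical result with a reference to \cite[Ch.~4]{broidawilliamson1989a} and is used as a black box in the proof of \cref{cor_cauc_binet}. There is therefore nothing to compare your argument against.

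That said, your proof is a correct and standard derivation of the Cauchy--Binet formula via the Leibniz expansion. The identification of the inner $\sigma$-sum as the determinant of a matrix with repeated rows (when $f$ is not injective), and the $(K,\tau)$-bookkeeping for injective $f$, are both fine; your substitution $\rho=\sigma\tau^{-1}$ together with the reindexing $k'=\tau(k)$ indeed yields $\sgn(\tau)\det(B_{K,J})$, and summing $\sgn(\tau)\prod_k A_{i_k,k_{\tau(k)}}$ over $\tau$ recovers $\det(A_{I,K})$. The exterior-algebra alternative you mention is also valid, but, as you note, unnecessary here.
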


\noindent The next corollary was proven in \cite[Prop.~2.1]{pelczynski1991parallelepipeds} for $i=n$.

\begin{corollary}\label{cor_cauc_binet}
Let $u_1,\ldots,u_m\in\bS^{n-1}$ and $\lambda_1,\ldots,\lambda_m\geq 0$ be such that $\sum_{i=1}^m\lambda_i u_iu_i^\intercal=\I_n$.
Then, for every $1\leq i \leq n$, we have
\[
\binom{n}{i}=\sum_{J\in\binom{[m]}{i}}\lambda_J\det((U_J)^\intercal U_J),
\]
where $\lambda_J=\prod_{j\in J}\lambda_j$ and $U_J=(u_j:j\in J) \in \R^{n \times i}$.
\end{corollary}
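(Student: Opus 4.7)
The plan is a straightforward but careful double application of the Cauchy-Binet formula to a symmetric square-root factorization of the identity. Since $\lambda_1,\ldots,\lambda_m\geq 0$, I set $U=(u_1\mid\cdots\mid u_m)\in\R^{n\times m}$ and $\sqrt{\Lambda}=\diag(\sqrt{\lambda_1},\ldots,\sqrt{\lambda_m})\in\R^{m\times m}$, and consider $M=U\sqrt{\Lambda}\in\R^{n\times m}$. The hypothesis $\sum_j\lambda_j u_ju_j^\intercal=\I_n$ then reads $MM^\intercal=\I_n$.

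Fixing $i\in\{1,\ldots,n\}$ and an index set $I\in\binom{[n]}{i}$, I exploit that $(MM^\intercal)_{I,I}=(\I_n)_{I,I}=\I_i$, so $\det((MM^\intercal)_{I,I})=1$. Applying \cref{thm_cauc_binet} to the product $M\cdot M^\intercal$ with $J=I$ yields
\[
1=\sum_{K\in\binom{[m]}{i}}\det(M_{I,K})\det((M^\intercal)_{K,I})=\sum_{K\in\binom{[m]}{i}}\det(M_{I,K})^2.
\]
Summing over all $\binom{n}{i}$ such index sets $I$ and swapping the order of summation gives
\[
\binom{n}{i}=\sum_{K\in\binom{[m]}{i}}\sum_{I\in\binom{[n]}{i}}\det(M_{I,K})^2.
\]

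For the second step, I observe that for each fixed $K\in\binom{[m]}{i}$, the inner sum is itself a Cauchy-Binet expansion of the Gram determinant of $M_{[n],K}\in\R^{n\times i}$: applying \cref{thm_cauc_binet} to the product $(M_{[n],K})^\intercal\cdot M_{[n],K}\in\R^{i\times i}$ with $I=J=[i]$ converts $\sum_I\det(M_{I,K})^2$ into $\det\bigl((M_{[n],K})^\intercal M_{[n],K}\bigr)$. Since $M_{[n],K}=U_K\sqrt{\Lambda_K}$ with $\Lambda_K=\diag(\lambda_j:j\in K)$, this Gram determinant factors as $\det(\Lambda_K)\det(U_K^\intercal U_K)=\lambda_K\det(U_K^\intercal U_K)$, which is exactly the contribution claimed by the corollary.

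I do not anticipate a genuine obstacle: the argument is bookkeeping of row/column index sets across two invocations of Cauchy-Binet, used once to evaluate a minor of a product and once to express a Gram determinant as a sum of squared minors. The only small point to keep in mind is that a principal submatrix of $\I_n$ indexed by the same set $I$ on rows and columns is again an identity, which is what makes the first application collapse to the constant $1$ and thereby produces the combinatorial factor $\binom{n}{i}$ after summation.
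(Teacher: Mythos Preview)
Your proof is correct and essentially identical to the paper's: the paper sets $W=(\sqrt{\lambda_1}u_1,\ldots,\sqrt{\lambda_m}u_m)$ (your $M$), writes $\I_n=WW^\intercal$, applies Cauchy--Binet to the $(I,I)$-minor of $WW^\intercal$, sums over $I$, swaps sums, and applies Cauchy--Binet a second time to recognize the inner sum as $\det((W_J)^\intercal W_J)=\lambda_J\det(U_J^\intercal U_J)$. The only cosmetic difference is that the paper first records the off-diagonal identity $\det((WW^\intercal)_{I,J})=\delta_{I,J}$ before specializing to $J=I$, whereas you go straight to the diagonal case.
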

\begin{proof}
For $1 \leq i \leq n$, let $w_i = \sqrt{\lambda_i}u_i$, and write $W = (w_1,\ldots,w_m) \in \R^{n\times m}$.
First, we show that $\I_n=\sum_{i=1}^mw_iw_i^\intercal=WW^\intercal$.
The first identity follows from the definition of the $w_i$, whereas the second follows from
\begin{align*}
\left<\e_k,\left(\sum_{i=1}^m w_iw_i^\intercal\right)\e_l\right> & = \sum_{i=1}^m\left<\e_k,w_iw_i^\intercal\e_l\right> = \sum_{i=1}^m \left<\e_k,\left<w_i,\e_l\right>w_i\right> \\
& = \sum_{i=1}^m\left<\e_k,w_i\right>\left<w_i,\e_l\right>=\sum_{i=1}^m w_{il}w_{ik}=\left<\e_k,WW^\intercal\e_l\right>,
\end{align*}
where we have used the alternative notation $\left<x,y\right> = x^\intercal y$ for the standard scalar product to improve readability.
Now, for every $I,J \in \binom{[n]}{i}$, let $\delta_{I,J} = 1$, if $I=J$, and $\delta_{I,J} = 0$, otherwise.
\cref{thm_cauc_binet} then implies
\begin{align*}
\delta_{I,J} = \det((\I_n)_{I,J}) &= \det((WW^\intercal)_{I,J})\\
&= \sum_{K\in\binom{[m]}{i}}\det(W_{I,K})\det((W^\intercal)_{K,J}).
\end{align*}
Therefore, using \cref{thm_cauc_binet} once again, we arrive at
\begin{align*}
\binom{n}{i}&=\sum_{I\in\binom{[n]}{i}}\sum_{J\in\binom{[m]}{i}}\det(W_{I,J})\det((W^\intercal)_{J,I})\\
&=\sum_{J\in\binom{[m]}{i}}\sum_{I\in\binom{[n]}{i}}\det((W^\intercal)_{J,I})\det(W_{I,J})=\sum_{J\in\binom{[m]}{i}}\det((W^\intercal W)_{J,J})\\
&=\sum_{J\in\binom{[m]}{i}}\det((W_J)^\intercal W_J)=\sum_{J\in\binom{[m]}{i}}\lambda_J\det((U_J)^\intercal U_J).\qedhere
\end{align*}
\end{proof}

The next result, for all $\lambda_i$ positive, is contained in~\cite[Ch.~II, \S 2.22, \bf 52]{hardylittlewoodpolya1952inequalities}, and in~\cite[Ch.~1, \S 12, (8)]{beckenbachbellman1965inequalities}, from which there follows the same inequality for all $\lambda_i \geq 0$.
However, the equality case is not discussed in either of these books, therefore for completeness we give a simple proof, clarifying also the cases of equality.

\begin{lemma}\label{lem:max_sum_prod_lambdas}
For $m\in \N$ and $c \geq 0$, let $\Delta_m^c = \left\{\lambda \in \R^m_{\geq 0} : \sum_{i=1}^m \lambda_i = c\right\}$.
For $d \in \N$ with $d \leq m$, let $\sigma_d(\lambda_1,\ldots,\lambda_m) = \sum_{I \in \binom{[m]}{d}}\prod_{i \in I}\lambda_i$ be the $d$-th elementary symmetric polynomial.
Then
\[
\max_{\lambda \in \Delta_m^c} \sigma_d(\lambda) = \sigma_d\left(\frac{c}{m},\ldots,\frac{c}{m}\right) = \binom{m}{d}\frac{c^d}{m^d},
\]
and this maximum is attained only for $\lambda_1 = \ldots = \lambda_m = c/m$.
\end{lemma}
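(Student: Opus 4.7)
The plan is to identify $\sigma_d(\lambda)$ as a classical elementary symmetric polynomial on the simplex $\Delta_m^c$ and apply Maclaurin's inequality, which is the standard refinement of AM-GM tailored for exactly this situation. Recall that Maclaurin's inequality asserts that for non-negative reals $\lambda_1,\ldots,\lambda_m$ and $1 \leq d \leq m$,
\[
\left(\frac{\sigma_d(\lambda)}{\binom{m}{d}}\right)^{1/d} \leq \frac{\sigma_1(\lambda)}{m},
\]
with equality if and only if $\lambda_1 = \cdots = \lambda_m$. Specialising the constraint $\sigma_1(\lambda) = c$ and raising to the $d$-th power immediately yields $\sigma_d(\lambda) \leq \binom{m}{d} (c/m)^d$ for every $\lambda \in \Delta_m^c$, with equality exactly at the centroid $\lambda = (c/m,\ldots,c/m)$.

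It remains to evaluate $\sigma_d$ at the centroid, which is a one-line computation: each of the $\binom{m}{d}$ subsets $I \in \binom{[m]}{d}$ contributes $\prod_{i \in I}(c/m) = (c/m)^d$, so $\sigma_d(c/m,\ldots,c/m) = \binom{m}{d}(c/m)^d$. Existence of the maximum is guaranteed \emph{a priori} by continuity of $\sigma_d$ and compactness of the simplex $\Delta_m^c$, but is in any case exhibited by the centroid.

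If an entirely self-contained argument is preferred to citing Maclaurin, a short smoothing argument works equally well. Fix a maximiser $\lambda^*$; for any pair of indices $i \neq j$ one may write
\[
\sigma_d(\lambda) = A_{ij} + (\lambda_i + \lambda_j)B_{ij} + \lambda_i \lambda_j \, C_{ij},
\]
where $A_{ij}, B_{ij}, C_{ij}$ depend only on the remaining coordinates and are non-negative as they are themselves sums of products of the non-negative $\lambda_k$. Replacing $\lambda_i^*, \lambda_j^*$ by their common average preserves their sum (hence keeps $\lambda$ in $\Delta_m^c$) and cannot decrease their product, so $\sigma_d$ does not decrease. Iterating the averaging forces all coordinates of the maximiser to coincide, again producing the centroid.

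I do not anticipate any genuine obstacles here; the only point worth double-checking is that the equality characterisation in Maclaurin's inequality is indeed rigid (which it is, and which only matters if one wanted to extract uniqueness of the maximiser, not needed for the statement as written).
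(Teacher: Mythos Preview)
Your proposal is correct. Invoking Maclaurin's inequality is a genuinely different and more economical route than the paper's: the paper argues from scratch by a perturbation argument, showing that if a maximiser had $\lambda_i > \lambda_j$ then shifting a small $\varepsilon$ from $\lambda_i$ to $\lambda_j$ strictly increases $\sigma_d$ (via the decomposition you also write down), and then handles separately, by induction on $m$, the boundary case where some coordinates vanish. Your Maclaurin approach collapses all of this into a single citation and a one-line evaluation at the centroid; what it costs is reliance on an external inequality, whereas the paper's argument is self-contained. Your alternative smoothing argument is essentially the paper's method in disguise, though the paper is a bit more careful: it uses a small-$\varepsilon$ shift to obtain a \emph{strict} increase (hence an immediate contradiction), rather than averaging and then appealing to an iteration/limit, and it treats the case of vanishing coordinates explicitly.
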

\begin{proof}
Let us first assume that if $\lambda \in \Delta_m^c$ attains the maximum value $\sigma_d(\lambda)$, then $\lambda_i \neq 0$, for all $1\leq i \leq m$.
In this case we use an indirect argument and suppose that the maximum is attained at some $\lambda \in \Delta_m^c$ with $\lambda_i > \lambda_j$, for some $1\leq i < j \leq m$.
Let $\eps > 0$ be small enough such that $\lambda_i - \lambda _j > \eps$, and let
$\bar \lambda = \lambda - \eps e_i + \eps e_j$.
For a subset $J \subseteq [m]$, we write $\lambda_J = \prod_{j \in J}\lambda_j$.
From the definition it follows that $\bar \lambda \in \Delta_m^c$, and moreover we have
\begin{align*}
\sigma_d(\bar \lambda) &= \sum_{i \in I \in \binom{[m]\setminus\{j\}}{d}}\bar\lambda_I + \sum_{j \in I \in \binom{[m]\setminus\{i\}}{d}}\bar\lambda_I + \sum_{\{i,j\} \subseteq I \in \binom{[m]}{d}}\bar\lambda_I + \sum_{I \in \binom{[m]\setminus\{i,j\}}{d}}\bar\lambda_I\\
&= (\lambda_i - \eps)\sum_{J \in \binom{[m]\setminus\{i,j\}}{d-1}}\lambda_J + (\lambda_j + \eps)\sum_{J \in \binom{[m]\setminus\{i,j\}}{d-1}}\lambda_J\\
&\phantom{=} + (\lambda_i - \eps)(\lambda_j + \eps)\sum_{J \in \binom{[m]\setminus\{i,j\}}{d-2}}\lambda_J + \sum_{I \in \binom{[m]\setminus\{i,j\}}{d}}\lambda_I\\
&= \sigma_d(\lambda) + \eps(\lambda_i - \lambda_j - \eps)\sum_{J \in \binom{[m]\setminus\{i,j\}}{d-2}}\lambda_J\\
&> \sigma_d(\lambda),
\end{align*}
contradicting the maximality of $\lambda$.
Note, that the inequality in the last line above is strict, because all $\lambda_i$ are assumed to be positive.

In order to finish the proof, we now show by induction on $m$, that in every optimal solution $\lambda \in \Delta_m^c$ we have $\lambda_i \neq 0$, for all $1\leq i \leq m$.
For $m=d$, we have $\sigma_d(\lambda) = \prod_{i=1}^d\lambda_i$.
So, if one of the $\lambda_i$ would vanish, then clearly the point is not a maximum.
If $m > d$, and without loss of generality $\lambda_1 \geq \ldots \geq \lambda_k > \lambda_{k+1} = \ldots = \lambda_m = 0$, then in view of what was shown above 
\[
\sigma_d(\lambda_1,\ldots,\lambda_m) = \sigma_d(\lambda_1,\ldots,\lambda_k) \leq \binom{k}{d}\frac{c^d}{k^d} < \binom{m}{d}\frac{c^d}{m^d}.
\]
Hence, $\lambda$ could not have been an optimum.
\end{proof}

We are now prepared to give our estimates on the Dvoretzky-Rogers-type constants $\DR(m,n,j)$.

%
%
%

\begin{proof}[Proof of \cref{thm_improved_Dvor_rog_ineq}]
Let $1 \leq j \leq n \leq m \leq \binom{n+1}{2}$, and let $u_1,\ldots,u_m\in\bS^{n-1}$ and $\lambda_1,\ldots,\lambda_m\geq 0$ be such that $\sum_{i=1}^m\lambda_i u_iu_i^\intercal=\I_n$.
Let the elements of a subset $J \in \binom{[m]}{j}$ be indexed by $J=\{i_1,\ldots,i_j\}$, and let $S_J = \conv\{0,u_{i_1},\ldots,u_{i_j}\}$ be the corresponding simplex.
With this notation, \cref{cor_cauc_binet} gives us
\begin{align*}
\binom{n}{j} & =\sum_{J \in \binom{[m]}{j}}\lambda_J\det((U_J)^\intercal U_J) = \sum_{J \in \binom{[m]}{j}}\lambda_J\left(j!\,\V_j(S_J)\right)^2 \\
&\leq (j!)^2 \sum_{J \in \binom{[m]}{j}}\lambda_J \bigg(\max_{J \in \binom{[m]}{j}}\V_j(S_J)\bigg)^2.
\end{align*}
By taking traces in $\I_n=\sum_{i=1}^m \lambda_i u_iu_i^\intercal$ and using $\|u_i\|=1$, we see that $n=\sum_{i=1}^m\lambda_i$.
Thus, we can apply \cref{lem:max_sum_prod_lambdas}, and obtain that
\[
\sum_{J \in \binom{[m]}{j}}\lambda_J \leq \binom{m}{j}\left(\frac{n}{m}\right)^j.
\]
Continuing the previous estimate we therefore arrive at
\[
\binom{n}{j} \leq (j!)^2 \binom{m}{j}\left(\frac{n}{m}\right)^j\bigg(\max_{J \in \binom{[m]}{j}}\V_j(S_J)\bigg)^2,
\]
as desired.

Let us now discuss equality cases for certain triples of parameters.
From the proof of the inequalities above we see that the bound on $\DR(m,n,j)$ is tight if and only if there is a decomposition $\I_n = \sum_{i=1}^m \lambda_iu_iu_i^\intercal$ such that
\begin{enumerate}[(i)]
 \item $\V_j(S_J) = \V_j(S_{J'})$, for every $J,J' \in \binom{[m]}{j}$ , and
 \item $\lambda_1 = \ldots = \lambda_m = \frac{n}{m}$ (see~\cref{lem:max_sum_prod_lambdas}).
\end{enumerate}
First of all, for every $1 \leq j \leq n$, we have
\[
\DR(n,n,j) = \frac{1}{j!}.
\]
In fact, if $\pm u_1,\ldots,\pm u_n$ are the vertices of a regular crosspolytope, then $\V_j(S_J) = 1 / j!$,
for every $J \in \binom{[n]}{j}$, and $\I_n = \sum_{i=1}^n u_iu_i^\intercal$.

Second, for every $1 \leq j \leq n$, we have
\[
\DR(n+1,n,j)^2 = \frac{(n-j+1)(n+1)^{j-1}}{n^j(j!)^2}.
\]
Indeed, if $u_1,\ldots,u_{n+1}$ are the vertices of a regular simplex, then every~$j$ of these vertices give rise to a $j$-dimensional simplex with the same volume.
Moreover, the reader quickly convinces herself that the coefficients $\lambda_i$ in the corresponding decomposition of the identity matrix are all equal to $n/(n+1)$.

Finally, we consider the case $j=2$ and $m = \binom{n+1}{2}$.
Writing $J = \{\ell,k\}$, we get
\[
\det((U_J)^\intercal U_J) = \det\left(\begin{array}{cc}1&u_\ell^\intercal u_k\\u_\ell^\intercal u_k&1\end{array}\right) = 1-(u_\ell^\intercal u_k)^2.
\]
Hence, the triangles $\V_2(S_J)$, $J \in \binom{[m]}{2}$, all have the same volume if
\[
\abs{\cos(\sphericalangle(u_\ell,u_k))} = |u_\ell^\intercal u_k| = \sqrt{1-\frac{\binom{n}{2}\left(\frac{m}{n}\right)^2}{\binom{m}{2}}} = \frac{1}{\sqrt{n+2}},
\]
for every $1 \leq \ell < k \leq m = \binom{n+1}{2}$.
In other words, the vectors $u_i$ are the directions of a set of~$\binom{n+1}{2}$ \emph{equiangular lines}.
(By a result of M.~Gerzon, cf.~\cite[Thm.~3.5]{lemmens1973equiangular} and its proof, the maximal number of equiangular lines in $\R^n$ is at most $\binom{n+1}{2}$, and this many equiangular lines necessarily enclose angles equal to $\arccos(1/\sqrt{n+2})$.)
For $n=2$, the directions from the center of an equilateral triangle to its vertices give a system of three equiangular lines.
For $n=3$, we may take the directions from the center of a regular icosahedron to its vertices.
In dimensions $n=7$ and $n=23$, there exist sets of $28$ and $276$ equiangular lines with an angle of $\arccos(1/3)$ and $\arccos(1/5)$, respectively, and for $n\leq 118$, $n \neq 2,3,7,23$, no such configuration exists (see \cref{rem_DR_mnj}~(\romannumeral3) for further details).
\end{proof}

\begin{remark}\label{rem_DR_mnj}\
\begin{enumerate}[(i)]
\item The technique used for proving \cref{thm_improved_Dvor_rog_ineq} is an extension of the arguments used by Pe\l{c}zy\'{n}ski \& Szarek
\cite[Prop.~2.1]{pelczynski1991parallelepipeds}, who considered the case $\DR(m,n,n)$. Moreover, this same bound has been obtained recently with probabilistic methods by Fodor, Nasz\'{o}di \& Zarn\'{o}cz~\cite{fodornaszoditamas2018on}.
    They also illustrate that the bound on $\DR(n+1,n,n)$ is tight because of the regular simplex.

\item \cref{thm_equival_isod_lown}~(v) allows us to think of $\DR(m,n,n)$ as the solution of a polynomial optimization problem.
Using the \texttt{scip} solver~\cite{MaherFischerGallyetal.2017}, we have obtained numerical evidence that
\[
\DR(5,3,3)=\frac1{8}\quad\text{and}\quad\DR(6,3,3)=\frac1{6\sqrt{2}}.
\]
Together with the proven values $\DR(3,3,3)=1/6$ and $\DR(4,3,3)=2/(9\sqrt{3})$ (cf.~\cref{thm_improved_Dvor_rog_ineq}),
this would completely solve the determination of $\DR(m,n,j)$ from \cref{Prob_dvorz_rog} in the case $n=j=3$.
The experiments with \texttt{scip} further suggest that $\DR(5,3,3)$ is attained by
\[
(u_1,\ldots,u_5) = \left(\begin{array}{ccccc}
0 & \frac{\sqrt{3}}{2} & -\frac{\sqrt{3}}{2} & 0 & 0 \\
0 & 0 & 0 & \frac{\sqrt{3}}{2} & -\frac{\sqrt{3}}{2}\\
1 & \frac12 & \frac12 & -\frac12 & -\frac12
\end{array}\right)
\]
and $(\lambda_1,\ldots,\lambda_5) = (\frac13,\frac23,\ldots,\frac23)$.

\item To date, $n=2,3,7,23$ are the only dimensions in which we know that there exist configurations of sets of $\binom{n+1}{2}$ equiangular lines of angle $\arccos\left(1/\sqrt{n+2}\right)$.
These configurations are constructed in~\cite{lemmens1973equiangular}.
Moreover, a result of Neumann (cf.~\cite[Thm.~3.4]{lemmens1973equiangular}) states that, if for $n \geq 4$ there is a set of $\binom{n+1}{2}$ equiangular lines, then $\sqrt{n+2}$ is an odd integer.
By results of Bannai et al.~\cite{bannaimunembenkov2005the} the first unknown candidate is $n=119$.
In general, proving the existence of such configurations in dimension $n$, where $n+2=k^2$ for some odd $k \geq 11$, would show the tightness of \cref{thm_improved_Dvor_rog_ineq} for $\DR(\binom{n+1}{2},n,2)$.
\end{enumerate}
\end{remark}


As a corollary to \cref{thm_improved_Dvor_rog_ineq}, we get an alternative proof of the asymptotic bound in \cref{thm_gen_rev_isod_ineq}.

\begin{corollary}\label{cor:asymptoticBound_viaCB}
Let $K\in\K^n$ be in Behrend position. Then,
\[
\iq(K) \geq \DR(\tbinom{n+1}{2},n,n) \geq \frac{\big(\frac{n+1}{2}\big)^{n/2}}{\binom{n(n+1)/2}{n}^{1/2} n!} \sim \sqrt{e} \left(\frac{2\pi}{n}\right)^{1/4} \frac{\sqrt{n+1}}{n!\,e^{n/2}}.
\]
For $n=2$, this is an alternative to Behrend's solution of the reverse isodiametric problem in the plane.
For $3$-dimensional bodies $K \in \K^3$ in Behrend position, it gives a lower bound of $\iq(K) \geq \sqrt{10}/30 \approx 0.10541$, which is very close to the conjectured optimal value $1/(6\sqrt{2}) \approx 0.11785$ in \cref{conj_rev_idi}.
\end{corollary}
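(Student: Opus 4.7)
The plan is to combine the characterization of the Behrend position from \cref{thm_equival_isod_lown} with the Dvoretzky--Rogers-type bound of \cref{thm_improved_Dvor_rog_ineq} and the Groemer ``common endpoint'' argument that drives the proof of \cref{thm_gen_rev_isod_ineq_geometrically}. First, since $K$ is in Behrend position, \cref{thm_equival_isod_lown}~(v) will supply a decomposition
\[
\I_n = \sum_{i=1}^m \lambda_i u_i u_i^\intercal, \qquad u_i \in \D_K,\ \lambda_i \ge 0,\ n \le m \le \tbinom{n+1}{2},
\]
of the identity by rank-one matrices whose spanning vectors are diametrical directions of~$K$.

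The second step is to feed this decomposition into \cref{thm_improved_Dvor_rog_ineq} with $j=n$, which yields indices $i_1 < \cdots < i_n$ satisfying $\vol_n(\conv\{0,u_{i_1},\dots,u_{i_n}\}) \ge \DR(m,n,n)$, and the monotonicity of $\DR(\cdot,n,n)$ noted just before \cref{thm_improved_Dvor_rog_ineq} upgrades this to $\DR(\binom{n+1}{2},n,n)$. Since each $u_{i_k}$ is a diametrical direction, there exist translates $t_k$ with $t_k + \D(K)[0,u_{i_k}] \subseteq K$. Applying Groemer's inequality exactly as in the proof of \cref{thm_gen_rev_isod_ineq_geometrically} then gives
\[
\V(K) \;\ge\; \D(K)^n \cdot \vol_n(\conv\{0,u_{i_1},\dots,u_{i_n}\}),
\]
and dividing by $\D(K)^n$ establishes the first inequality $\iq(K) \ge \DR(\binom{n+1}{2},n,n)$.

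The explicit form in the second inequality of the corollary then follows by substituting $m=\binom{n+1}{2}$ and $j=n$ into the bound of \cref{thm_improved_Dvor_rog_ineq} and taking a square root. For the asymptotic estimate I would bound $\binom{n(n+1)/2}{n} \le (n(n+1)/2)^n / n!$, which collapses the middle expression to $1/(n^{n/2}\sqrt{n!})$, and then apply Stirling's formula to recover $\sqrt{n+1}/(n!\,e^{n/2})$. The two concrete claims are routine: for $n=2$ the bound evaluates to $\sqrt{3}/4$, reproducing Behrend's sharp value in the plane, while for $n=3$ direct substitution of $m=6$ gives $\sqrt{10}/30$.

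Since all the heavy lifting has been done in the earlier sections, I do not foresee a serious obstacle. The only points that require some care are the monotonicity step that lets us replace the actual size $m$ of the decomposition by $\binom{n+1}{2}$, and the correct invocation of Groemer's theorem so that the positions of the diametrical segments inside $K$ become irrelevant and only the simplex spanned by their directions contributes.
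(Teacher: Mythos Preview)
Your proposal is correct and follows essentially the same approach as the paper: both invoke \cref{thm_equival_isod_lown}~(v) to obtain a decomposition of the identity by diametrical directions, apply Groemer's common-endpoint argument as in the proof of \cref{thm_gen_rev_isod_ineq_geometrically} to bound $\iq(K)$ below by the volume of the corresponding simplex, and then use the definition of $\DR$ together with the explicit bound from \cref{thm_improved_Dvor_rog_ineq} and Stirling's formula. The only cosmetic difference is the order of the steps (the paper first records $\iq(K)\ge \V(\conv\{0,u_{i_1},\dots,u_{i_n}\})$ for arbitrary indices and then appeals to the definition of $\DR$, whereas you first select the indices via \cref{thm_improved_Dvor_rog_ineq}); your additional remarks on the asymptotic simplification and the numerical checks for $n=2,3$ are accurate.
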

\begin{proof}
Since $K$ is in Behrend position, \cref{thm_equival_isod_lown}~(v) provides us with diametrical directions $u_1,\ldots,u_m \in \D_K$ and scalars $\lambda_1,\ldots,\lambda_m \geq 0$, for some $n \leq m \leq \binom{n+1}{2}$, such that $\I_n = \sum_{i=1}^m \lambda_i u_iu_i^\intercal$.

By the same reasoning as in the proof of \cref{thm_gen_rev_isod_ineq}, for every choice of indices $1 \leq i_1 < \ldots < i_n \leq m$, we have
\[
\iq(K) \geq \V(\conv\{0,u_{i_1},\ldots,u_{i_n}\}).
\]
The definition of $\DR(m,n,j)$ then implies that $\iq(K) \geq \DR(\tbinom{n+1}{2},n,n)$ and so we can employ the lower bound from \cref{thm_improved_Dvor_rog_ineq}.
The asymptotics follow from Stirling's approximation of the factorial function.
\end{proof}

Therefore, the triple $(\binom{n+1}{2},n,n)$ is the most interesting concerning the reverse isodiametric problem.
In fact, the proof of \cref{cor:asymptoticBound_viaCB} shows that the following claim would imply Makai Jr.'s \cref{conj_rev_idi}~\eqref{eqn_conj_gen_case}.

\begin{conjecture}
For every $n \in \N$, we have $\DR(\binom{n+1}{2},n,n) = \frac{\sqrt{n+1}}{n!\,2^{n/2}}$.
\end{conjecture}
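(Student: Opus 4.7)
The plan is to establish the upper bound by explicit construction and then attempt the matching lower bound via a strengthening of the Cauchy-Binet argument of \cref{thm_improved_Dvor_rog_ineq}. For the upper bound, I would take the regular simplex configuration: let $v_1,\ldots,v_{n+1}\in\bS^{n-1}$ be the vertices of a regular simplex centered at the origin, so $v_i^\intercal v_j = -1/n$ for $i \neq j$; set $u_{ij} = (v_i - v_j)/\|v_i-v_j\|$ for $1 \leq i < j \leq n+1$ and $\lambda_{ij} = 2/(n+1)$. Using the identity $\sum_i v_iv_i^\intercal = \frac{n+1}{n}\I_n$, a direct calculation gives $\sum_{i<j}\lambda_{ij}u_{ij}u_{ij}^\intercal = \I_n$. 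The $n$ vectors $u_{1,2},\ldots,u_{1,n+1}$ span with the origin a simplex of volume precisely $\sqrt{n+1}/(n!\,2^{n/2})$, and by symmetry of the configuration every cycle-free choice of $n$ of the $u_{ij}$ yields the same value, while choices containing a cycle are degenerate. This gives $\DR(\binom{n+1}{2},n,n) \leq \sqrt{n+1}/(n!\,2^{n/2})$.

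The lower bound is the hard half. The natural starting point is the Cauchy-Binet identity $1 = (n!)^2 \sum_{J\in\binom{[m]}{n}}\lambda_J\vol_n(S_J)^2$ with $m=\binom{n+1}{2}$. Bounding $\vol_n(S_J)^2$ by its maximum and invoking \cref{lem:max_sum_prod_lambdas} reproduces the estimate of \cref{cor:asymptoticBound_viaCB}, which is sharp only at $n=2$, since $\binom{n(n+1)/2}{n}>(n+1)^{n-1}$ for $n\geq 3$. In other words, the extremal weights $\lambda_i\equiv n/m$ asserted by \cref{lem:max_sum_prod_lambdas} cannot actually be realized jointly with the full matrix constraint $\I_n = \sum \lambda_i u_iu_i^\intercal$. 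The central obstacle is therefore to extract additional information from the matrix equation beyond its trace, and to prove a sharp inequality whose equality case forces the regular simplex configuration up to orthogonal transformations.

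My proposed strategy is to adapt Barthe's reverse Brascamp-Lieb argument, which underlies the optimal symmetric bound in \cref{thm_o-symm_rev_isod_ineq}, to the asymmetric setting in the spirit of Ball's reverse isoperimetric inequality. Applying Barthe's inequality to the $o$-symmetric body $\frac{1}{\D(K)}(K-K)$ and combining with Rogers--Shephard $\vol(K-K)\leq\binom{2n}{n}\vol(K)$ loses the correct constant, yielding only the Makai-type bound $2^n/(n!\binom{2n}{n})$. Instead, one should deploy an asymmetric Brascamp-Lieb datum supported on the diametrical directions $\D_K$ themselves, calibrated so that the regular simplex is the unique equality case. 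The improvement factor $2^n/(n+1)$ over the symmetric bound should emerge from the richer structure of $\D_K$ for the simplex---$\binom{n+1}{2}$ antipodal pairs forming a highly constrained tight frame with inner products in $\{0,\pm 1/2\}$, as opposed to the $n$ orthogonal pairs of the crosspolytope. Identifying the correct datum and verifying uniqueness of the simplex as its extremizer is, I expect, the main technical difficulty, analogous to the delicate functional construction in Ball's original proof of the reverse isoperimetric inequality.
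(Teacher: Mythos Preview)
This statement is listed in the paper as a \emph{Conjecture}, not a theorem; the paper offers no proof and explicitly presents it as an open problem whose resolution would imply Makai's \cref{conj_rev_idi}. So there is no proof in the paper against which to compare your attempt.

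Your upper bound construction is correct. The verification that $\sum_{i<j}\frac{2}{n+1}u_{ij}u_{ij}^\intercal=\I_n$ is a clean computation, and your claim that every spanning-tree choice of $n$ edges yields the same simplex volume while cyclic choices degenerate is true: writing $u_{ij}=(v_i-v_j)/c$ with $c^2=2(n+1)/n$, one has for any tree $T$ that $\det(U_T^\intercal U_T)=c^{-2n}\det(B_T^\intercal V^\intercal V B_T)=c^{-2n}\big(\tfrac{n+1}{n}\big)^n\det(B_T^\intercal B_T)$, and $\det(B_T^\intercal B_T)=n+1$ for every spanning tree by total unimodularity of the incidence matrix and Cauchy--Binet. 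This establishes $\DR(\binom{n+1}{2},n,n)\le \sqrt{n+1}/(n!\,2^{n/2})$, which the paper does not spell out but clearly has in mind.

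The lower bound, however, is not a proof but a research programme, and you are candid about this. You correctly identify why the argument of \cref{thm_improved_Dvor_rog_ineq} loses a factor: \cref{lem:max_sum_prod_lambdas} ignores the full matrix constraint and is saturated by a configuration that does not decompose $\I_n$. Your proposed route through an asymmetric Brascamp--Lieb/Barthe inequality is a natural idea, but as you note, the crux---constructing the right datum on $\D_K$ whose unique extremizer is the regular simplex edge-frame---is exactly the missing ingredient, and nothing in your outline indicates how to supply it. In short, the gap is genuine and coincides with the gap the paper itself leaves open.
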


\section{Analogies to the reverse isominwidth problem}\label{sect_isominwidth_position}

For a convex body $K\in\K^n$ and a direction $u\in\R^n\setminus\{0\}$, the support function of $K$ with respect to~$u$ is defined as $h(K,u) = \max\{x^\intercal u:x\in K\}$.
The width of $K$ in direction $u\in\bS^{n-1}$ is given by $\w(K,u) = h(K,u)+h(K,-u)$.
Finally, the \emph{minimum width} of $K$ is defined as
\[
\w(K) = \min_{u\in\bS^{n-1}} \w(K,u).
\]
For an $o$-symmetric convex body $K \in \K^n_o$, the minimum width and the diameter are dual to each other in the sense that
\[
\w(K)\D(K^\star) = 4
\]
(cf.~\cite[(1.2)]{gritzklee1992inner}).
Here, $K^\star = \{x \in \R^n : x^\intercal y \leq 1,\,\forall y \in K\}$ denotes the \emph{polar body} of~$K$.
In the following, we elaborate on this duality and investigate the dual of the reverse isodiametric problem.

In analogy to the isodiametric quotient, we define the \emph{isominwidth quotient} of a convex body $K \in \K^n$ as
\[
\iwq(K):=\frac{\V(K)}{\w(K)^n}
\]
and we may ask for upper and lower bounds on this magnitude.
The question on optimal lower bounds is classical in Convex Geometry.
P\'al \cite{pal1921ein} proved that, for every planar $K\in\K^2$, we have
\[
\iwq(K)\geq\frac{1}{\sqrt{3}},
\]
and that equality holds if and only if $K$ is an equilateral triangle.
In arbitrary dimension, the following bound is due to
Firey~\cite{firey1965lower} (see also K.~Bezdek~\cite{bezdek2013tarski} for a slightly improved yet much more involved bound):
\[
\iwq(K)\geq\frac{2}{\sqrt{3}n!},\quad\text{for }K\in\K^n.
\]
However, the optimal bound (often called \emph{the convex Kakeya problem} or \emph{P\'al problem}) is not known.
Already in $\R^3$, one can slice a small neighbourhood of a vertex of a regular tetrahedron $T_3$, obtaining a new polytope $T_3'$, without reducing its minimum width.
Hence, one gets $\iwq(T_3')<\iwq(T_3)$, so that~$T_3$ is not a minimizer in P\'al's problem.
If one continues slicing $T_3'$ in a certain way until no more slicing is possible without reducing the minimum width, Heil~\cite{heil1978kleinste} conjectures that the resulting body is the solution to P\'al's question.

If we restrict to $o$-symmetric convex bodies $K\in\K^n_o$ the situation gets much easier.
Indeed, since $(\w(K)/2)\B^n_2\subseteq K$, one obtains
\[
\iwq(K)\geq\frac{\V(\B^n_2)}{2^n},
\]
which holds with equality if and only if $K$ is a Euclidean ball.

Analogously to lower estimates on the isodiametric quotient, there exists no upper bound on $\iwq(K)$ that is independent of the body $K \in \K^n$.
Hence, we may study whether the minimal isominwidth quotient among all linear images of~$K$ can be upper bounded by a constant only depending on the dimension~$n$.

First of all, an analogous argumentation as in \cref{lem_attainment_Behrendposition} leads to

\begin{lemma}\label{thm_isominwidth_position}
For every $K \in \K^n$, there exists an $A \in \GL_n(\R)$ such that
\[
\iwq(AK) = \inf_{B \in \GL_n(\R)}\iwq(BK).
\]
\end{lemma}

This of course allows to define yet another \emph{position} of a convex body, this time with respect to the minimum width.

\begin{definition}\label{def_isominwidth_position}
A convex body $K\in\K^n$ is in \emph{isominwidth position}, if
\[
\iwq(K) = \min_{A\in\GL_n(\R)} \iwq(AK).
\]
\end{definition}

\noindent Now, we want to establish the analog of \cref{thm_equival_isod_lown} for the isominwidth position.
To this end, we need some further notation.
Let
\[
\wD_K = \left\{ u \in \bS^{n-1} : \, h(K,u)+h(K,-u)=\w(K) \right\}
\]
be the set of \emph{minwidth directions}, that is, the directions in which the minimum width of $K$ is attained.
It is well-known that if $u\in\wD_K$, then there exists an $x\in K$ such that $x+\w(K)[0,u]\subseteq K$ (cf.~\cite{gritzklee1992inner}).
Moreover, let $H(K,u)=\{x \in \R^n : x^\intercal u =  h(K,u)\}$ be the supporting hyperplane of $K$ in the direction $u$, and let $H^-(K,u)$ be the corresponding halfspace containing~$K$.

Just as the Behrend position is strongly tied to the L\"owner position, it turns out that the isominwidth position is linked to the John position of a convex body.
Dually to the L\"owner position, $K\in\K^n$ is in \emph{John position} if~$\B^n_2$ is the maximum volume ellipsoid, called the \emph{John ellipsoid}, contained in~$K$.
The charaterization of the John position by the existence of a certain decomposition of the identity is verbatim to \cref{thm_lowner_position}, except for that we need to replace the condition $K \subseteq \B^n_2$ by $\B^n_2\subseteq K$ (cf.~\cite{henk2012loewnerjohn} and \cite[Ch.~11]{gruber2007convex}).
We now formulate our desired characterization of the isominwidth position.

\begin{theorem}\label{thm_isominwidth_John}
Let $K\in\K^n$. The following are equivalent:
\begin{enumerate}[(i)]
 \item $K$ is in isominwidth position.
 \item $K-K$ is in isominwidth position.
 \item $(K-K)/\w(K)$ is in John position.
 \item $\bigcap_{u\in\wD_K}H^-(\B^n_2,u)$ is in John position.
 \item There exists an $m\in\{n,\dots,\binom{n+1}{2}\}$, scalars $\lambda_1,\ldots,\lambda_m\geq 0$, and minwidth directions $u_1,\ldots,u_m \in \wD_K$, such that
\[
\I_n=\sum_{i=1}^m\lambda_i u_iu_i^\intercal.
\]
\end{enumerate}
\end{theorem}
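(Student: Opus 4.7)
My plan is to mirror the proof of \cref{thm_equival_isod_lown} in the dual setting, replacing the diameter by the minimum width and the L\"owner position by its dual, the John position. The first equivalence (i)$\Leftrightarrow$(ii) is a direct computation: since $K-K$ is $o$-symmetric, $\w(M(K-K)) = 2\,\w(MK)$ for every $M \in \GL_n(\R)$, and $\V(M(K-K)) = |\det M|\,\V(K-K)$, so $\iwq(M(K-K)) = 2^{-n}(\V(K-K)/\V(K))\,\iwq(MK)$ is a positive multiple of $\iwq(MK)$ depending only on $K$. Hence $K$ and $K-K$ are simultaneously in isominwidth position.

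For (ii)$\Leftrightarrow$(iii) I would use scale-invariance of $\iwq$ to pass from $K-K$ to $L := \w(K)^{-1}(K-K)$, thereby reducing to the dual of \cref{lem_lown_isodiam}: an $o$-symmetric $L$ with $\w(L) = 2$ is in John position iff it is in isominwidth position. This rests on the dual of \cref{lem_low_det_1}: $L$ is in John position iff $M(\B_2^n) \nsubseteq L$ for every $M \in \NO_1(n)$, proven forward by uniqueness of the John ellipsoid and backward by adapting the touching-point argument of \cref{lem_low_det_1} to the inscribed setting. Since by $o$-symmetry $M(\B_2^n) \subseteq L$ is equivalent to $\w(M^{-1}L) \geq 2 = \w(L)$, substituting $M \mapsto M^{-1}$ rephrases the John condition as $\w(ML) < \w(L)$ for every $M \in \NO_1(n)$, which is exactly the isominwidth position among volume-preserving maps; the converse implication handles a potentially larger inscribed ellipsoid by normalizing it to a volume-preserving map that strictly increases $\w(ML)$.

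For (iii)$\Leftrightarrow$(iv)$\Leftrightarrow$(v), the crucial observation is that both $L$ and $P := \bigcap_{u \in \wD_K} H^-(\B_2^n, u)$ contain $\B_2^n$ with the same contact set $\wD_K$ on $\bS^{n-1}$. For $L$, this is because $h(L, u) = \w(K,u)/\w(K) = 1$ iff $u \in \wD_K$. For $P$, clearly $\wD_K \subseteq \bd P \cap \bS^{n-1}$; conversely, any $u \in \bd P \cap \bS^{n-1}$ must lie on some defining hyperplane $\{x^\intercal v = 1\}$ with $v \in \wD_K$, and $u^\intercal v = 1$ together with $\|u\| = \|v\| = 1$ forces $u = v \in \wD_K$ by Cauchy-Schwarz (here I use that $\wD_K$ is closed, which follows from continuity of the width function). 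The $o$-symmetric version of John's theorem (dual of \cref{thm_lowner_position}) then characterizes the John position of either body by the existence of a decomposition $\I_n = \sum_{i=1}^m \lambda_i u_iu_i^\intercal$ with $u_1,\ldots,u_m \in \wD_K$, $\lambda_i \geq 0$, and $m \leq \binom{n+1}{2}$, which is statement~(v).

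The main obstacle I anticipate is the careful dualization in (ii)$\Leftrightarrow$(iii): one must turn the uniqueness of the John ellipsoid into the strict inequality $\w(ML) < \w(L)$ for $M \in \NO_1(n)$, and conversely rule out a larger inscribed ellipsoid using only the non-strict isominwidth condition $\w(ML) \leq \w(L)$ over volume-preserving maps. The remaining steps are structurally identical to those in the proof of \cref{thm_equival_isod_lown} and reduce to the verification that the contact points on $\bS^{n-1}$ of the two bodies $L$ and $P$ coincide with $\wD_K$.
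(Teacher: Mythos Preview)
Your proposal is correct and follows exactly the route the paper indicates: the authors explicitly say that the proof ``is based on the same ideas as that for the Behrend position given in \cref{sect_behrend_loewner}'' and leave the details to the reader, so your dualization of \cref{lem_low_det_1}, \cref{lem_lown_isodiam}, and the contact-point argument in the proof of \cref{thm_equival_isod_lown} is precisely what is intended. Your identification of the contact sets $\bS^{n-1}\cap\bd L$ and $\bS^{n-1}\cap\bd P$ with $\wD_K$ is the right analog of the observation $\frac{1}{\D(K)}(K-K)\cap\bS^{n-1}=\D_K$ used in (iii)$\Leftrightarrow$(iv) of \cref{thm_equival_isod_lown}, and the subtlety you flag in (ii)$\Leftrightarrow$(iii) is handled, as in the original, by the uniqueness of the John ellipsoid.
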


The proof of this characterization is based on the same ideas as that for the Behrend position given in \cref{sect_behrend_loewner}.
For the sake of brevity, we do not give the details here and leave them to the reader.

Just like the L\"owner ellipsoid, also the John ellipsoid of a convex body~$K$ is unique (cf.~\cite{artstein2015asymptotic,henk2012loewnerjohn}).
For an $o$-symmetric convex body $K$, both the L\"owner and the John ellipsoid are $o$-symmetric as well.
Moreover, since for every ellipsoid $E \in \K^n$ we have $\vol(E) \vol(E^\star) = \vol(\B^n_2)^2$, we see that the polar of the L\"owner ellipsoid of $K$ is the John ellipsoid of $K^\star$, and vice versa.
Again by uniqueness of the John ellipsoid, and analogously to \cref{thm_uniqueness_Behrend_pos}, we obtain

\begin{proposition}\label{thm_uniqueness_isominwidth_pos}
The isominwidth position of a convex body is unique up to orthogonal transformations, scalings, and translations.
\end{proposition}

As the main result of this section, we completely solve the reverse iso\-minwidth problem.
Curiously, it turns out that $o$-symmetric convex bodies have the worst minimum isominwidth quotient, which is in strong contrast to the Behrend position.
The proof follows the ideas developed by Ball~\cite{ball1991volume} for the volume ratio of a convex body in John position.
We need Ball's~\cite[Lem.~5]{ball1991volume} geometric version of the inequality of Brascamp \& Lieb (cf.~\cite[Cor.~3, Prop.~1, Thms.~4 and 5]{barthe1998onareverse}).
Notice that the case of equality of Corollary~3 in~\cite{barthe1998onareverse} is stated incorrectly.
Therefore, we here explicitly state the correct form, which in fact follows from the considerations in~\cite[Sect~2.3.1, pp.~352--353]{barthe1998onareverse}.

\begin{theorem}[Brascamp \& Lieb 1976, Ball 1991, Barthe 1998]\label{thm_BL_ineq}\ \\
Let $\lambda_1,\ldots,\lambda_m > 0$ and $u_1,\ldots,u_m \in \bS^{n-1}$, 
for some $n \leq m \in \N$, be such that $\sum_{i=1}^m\lambda_i u_iu_i^\intercal=\I_n$.
Further, let $f_1,\ldots,f_m:\R\rightarrow[0,\infty)$ be measurable functions with $0<\|f_i\|_1<\infty$, $i=1,\dots,m$.
Then
\[
\int_{\R^n}\bigg(\prod_{i=1}^mf_i(x^\intercal u_i)^{\lambda_i}\bigg)dx \leq \prod_{i=1}^m\left(\int_{\R}f_i(t)dt\right)^{\lambda_i}.
\]
Moreover, if none of the functions $f_i$ is a Gaussian function, then
equality holds if and only if
there is an orthonormal basis $\{v_1,\dots,v_n\}$ of $\R^n$, such that $\{v_1,\dots,v_n\}\subseteq\{u_1,\dots,u_m\}\subseteq\{\pm v_1,\dots,\pm v_n\}$,
the sum of the $\lambda_i$'s associated to the $u_i$'s which are equal to $v_k$ or $-v_k$ equals $1$, for each $1\leq k\leq n$,
the functions $f_i$, associated to the $u_i$'s which are equal to $v_k$ (resp.~$-v_k$), are proportional, for each $1\leq k\leq n$,
and if $u_i=v_k$ and $u_j=-v_k$, for some $i,j,k$, then $f_i(t)$ is proportional to $f_j(-t)$, for each $1\leq k\leq m$.
\end{theorem}

As said above, some equality cases are missing in \cite[Cor.~3]{barthe1998onareverse}.
For instance, notice that the set $\{e_1,-e_1\}$ in $\R$ gives a decomposition of the identity with $\lambda_1=\lambda_2=1/2$,
$\{e_1,-e_1\}$ is (in the notation of \cite{barthe1998onareverse}) irreducible, and if $f_1(t)=f_2(-t)$, we would have equality in \cref{thm_BL_ineq}.


\begin{proof}[Proof of \cref{thm_isominwidth_reverse_ineq}]
Applying a suitable scaling of $K$ we may suppose that $\w(K)=2$.
Since $K\subseteq H^-(K,u)$, for every $u\in\wD_K$, we have
\[
K\subseteq C = \bigcap_{u\in\wD_K}H^-(K,u).
\]
By \cref{thm_isominwidth_John}, we have $\I_n=\sum_{i=1}^m\lambda_i u_iu_i^\intercal$ for some $\lambda_i > 0$ and $u_i \in \wD_K$.

We define $r_i=h(K,-u_i)$, for $1 \leq i \leq m$, and we observe that $h(K,u_i) = \w(K,u_i) - r_i = 2 - r_i$.
Further, let
\[
f_i(t) = \chi_{[-r_i,-r_i+2]}(t) = \begin{cases} 1 & \text{if }-r_i\leq t\leq -r_i+2,\\ 0 & \text{otherwise,}\end{cases}
\]
be the characteristic function of $[-r_i,-r_i+2]$, and observe that
\[
C = \bigg\{x\in\R^n : \prod_{i=1}^m f_i(x^\intercal u_i)^{\lambda_i} = 1\bigg\}.
\]
Recalling from the proof of \cref{thm_improved_Dvor_rog_ineq} that $\sum_{i=1}^m \lambda_i = n$, \cref{thm_BL_ineq} yields that
\begin{align*}
\V(K) & \leq \V(C) = \int_{\R^n}\chi_C(x)dx = \int_{\R^n}\bigg(\prod_{i=1}^mf_i(x^\intercal u_i)^{\lambda_i}\bigg)dx \\
& \leq \prod_{i=1}^m\left(\int_{\R}f_i(t)dt\right)^{\lambda_i}=\prod_{i=1}^m2^{\lambda_i}=2^{\sum_{i=1}^m\lambda_i}=2^n = \w(K)^n,
\end{align*}
as desired.

If we have equality, we need to have equality in each step of the estimate above.
Hence, equality holds if and only if $\V(K)=\V(C)$, and thus $K=C$,
and, since none of the characteristic functions $f_i$ is a Gaussian function, equality in \cref{thm_BL_ineq} implies that
there exists an orthonormal basis $\{v_1,\dots,v_n\}$ of $\R^n$ such that
$\{v_1,\dots,v_n\}\subseteq\{u_1,\dots,u_m\}\subseteq\{\pm v_1,\dots,\pm v_n\}$. Moreover,
notice that if there exists $v_k$ such that $u_i=u_j=v_k$ (resp.~such that $u_i=v_k$ and $u_j=-v_k$),
then $f_i(t)=f_j(t)$ (resp.~$f_i(t)=\chi_{[-r_i,-r_i+2]}(t)=\chi_{[-r_j,-r_j+2]}(-t)=f_j(-t)$, where $r_i+r_j=h(K,-v_k)+h(K,v_k)=2$),
which is compatible with the equality cases of \cref{thm_BL_ineq}. In particular, $C$ is a cube of edge-length $2$ and whose edges are parallel to $v_1,\dots,v_n$, which concludes the proof.
\end{proof} 

\subsection*{Acknowledgments}
We thank Ambros Gleixner, Benjamin M\"uller, and Felipe Serrano from the Zuse Institute Berlin for discussions and help regarding the \texttt{scip} experiments described in \cref{rem_DR_mnj}.
Furthermore, we thank Peter Gritzmann and Martin Henk for the possibility of mutual research visits at TU Munich and TU Berlin.
The first author also thanks TU Munich and the University Centre of Defence of San Javier, where
part of this work has been done. We would like to thank the anonymous referee for comments and suggestions that helped us to improve the writing, and in particular, for pointing out that \cite[Cor.~3]{barthe1998onareverse} is incorrectly stated.

\bibliographystyle{amsplain}
\bibliography{mybib}

\end{document}